\newtheorem{theorem}{Theorem}[section]
\newtheorem{proposition}[theorem]{Proposition}
\newtheorem{lemma}[theorem]{Lemma}
\newtheorem{corollary}[theorem]{Corollary}
\newtheorem{definition}[theorem]{Definition}
\newtheorem{claim}[theorem]{Claim}
\theoremstyle{remark}
\newtheorem{remark}[theorem]{Remark}
\begin{document}
\title
[Albanese cokernel] 
{A higher dimensional generalization of Lichtenbaum duality in terms of the Albanese map} 
\author{Wataru Kai}
\email{kaiw@ms.u-tokyo.ac.jp} 
\address{Graduate School of Mathematical Sciences\\ The University of Tokyo \\3-8-1 Komaba\\%
Meguro-ku, Tokyo\\153-8914 Japan}
\shortauthors{W.~Kai}
\classification{14F22, 14G20}
\keywords{Albanese map, Brauer groups}
\thanks{The author is supported by the Program for Leading Graduate 
Schools, MEXT, Japan, and by Japan Society for the Promotion of Science through JSPS KAKENHI Grant Number 15J02264.}
\begin{abstract}
In this article, 
we present a conjectural formula describing the cokernel of the Albanese map 
of zero-cycles 
of smooth projective varieties $X$ over $p$-adic fields in terms of the N\'eron-Severi group and provide a proof under additional assumptions 
on an integral model of $X$.
The proof depends on a non-degeneracy result of Brauer-Manin pairing due to Saito-Sato and on Gabber-de Jong's comparison result of cohomological- and Azumaya-Brauer groups.
We will also mention the local-global problem for the Albanese-cokernel; the 
abelian group on the ``local side'' turns out to be a finite group.
\end{abstract}
\maketitle






\maketitle


\section{Introduction}
Let $K$ be a $p$-adic field with residue field $k$
and $X$ be a smooth projective variety over $K$.
In this paper, the term ``variety'' always means a geometrically irreducible scheme
over a field.

When $X$ is a curve, Lichtenbaum \cite{lich} defined and studied the pairing
\begin{equation}
\mathrm{Pic}(X) \times \mathrm{Br}(X) \to \mathbb{Q/Z} \tag{\text{L}}
\end{equation}
which he shows to be non-degenerate, where $\mathrm{Br}(X)$ denotes the \'etale cohomology
grouop $H^2_{\mathrm{et}}(X,\mathbb{G}_m)$. Further, with the aid of Tate duality for Abelian varieties over
$p$-adic fields, he proved that the cokernel of the map
\[
\mathrm{Pic}^0(X) \to J_X(K)
\]
is canonically Pontryagin dual to the cokernel of the degree map
$\mathrm{Pic}(\overline{X})^{G_K} \to \mathbb{Z}$.
Here $J_X$ denotes the Jacobian variety of $X$ and $\overline{X}$ is the base change to an algebraic closure $\overline{K}$ of $K$. ${G_K}$ denotes the absolute Galois group of $K$.

\vspace{10pt}
In this paper we present an attempt to generalize Lichtenbaum's result to higher dimensional $X$.
Namely, we study the cokernel of the Albanese map
\[
\mathrm{alb}_X: \mathrm{CH}_0(X)^0\to \mathrm{Alb}_X(K)
\]
where $\mathrm{CH}_0(X)^0$ denotes the degree $0$ part of the Chow group $\mathrm{CH}_0(X)$.

It is easy to see that this cokernel is torsion, is a birational invariant 
(i.e.~birational morphisms induce isomorphisms)
of smooth 
proper varieties over any field.
Over $p$-adic fields, it is finite by the argument of Saito and Sujatha \cite{sasu}.
Except these basic facts, little was known. 

Our main result is the following.
\begin{theorem}[(\textit{see} Theorem \ref{th1} and Remark \ref{cor})]
Let $X$ be a smooth projective variety over a $p$-adic field $K$ and suppose $X$ admits a smooth projective
model $\mathcal{X}$ over the integer ring $\mathcal{O}_K$
whose Picard scheme $\mathcal{\bf Pic}_{\mathcal{X}/\mathcal{O}_K}$ over $\mathrm{Spec}(\mathcal{O}_K)$ (which exists in this case) is smooth.

Then the cokernel of the map
\[
\mathrm{alb}_X: \mathrm{CH}_0(X)^0 \to \mathrm{Alb}_X(K)
\]
is canonically Pontryagin dual to the cokernel of
\[
\mathrm{Pic}(\overline{X})^{G_K} \to \mathrm{NS}(\overline{X})^{G_K}.
\]
Moreover, the last cokernel is a subquotient of the $p$-primary torsion subgroup of
$\mathrm{NS}(\overline{X})^{G_K}$.
\end{theorem}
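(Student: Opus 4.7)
I plan to realize both cokernels as Pontryagin duals of subgroups of a common Galois cohomology group. Since $\mathrm{Pic}_{X/K}$ is smooth by hypothesis, $\mathrm{Pic}^0_{X/K}=A^\vee$ is an abelian variety, dual to $A:=\mathrm{Alb}_X$. The short exact sequence $0\to A^\vee(\overline{K})\to\mathrm{Pic}(\overline{X})\to\mathrm{NS}(\overline{X})\to 0$ of $G_K$-modules yields, via the connecting map $\delta$, an injection
\[
C:=\mathrm{coker}\bigl(\mathrm{Pic}(\overline{X})^{G_K}\to\mathrm{NS}(\overline{X})^{G_K}\bigr)\hookrightarrow H^1(K,A^\vee).
\]
By Tate duality for abelian varieties over $p$-adic fields, $H^1(K,A^\vee)$ is the Pontryagin dual of $A(K)=\mathrm{Alb}_X(K)$, and dually $\mathrm{coker}(\mathrm{alb}_X)$ is the Pontryagin dual of $\mathrm{im}(\mathrm{alb}_X)^\perp\subset H^1(K,A^\vee)$. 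The theorem then reduces to showing that these two subgroups of $H^1(K,A^\vee)$ coincide.

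\textbf{Matching via Brauer--Manin.} An element $[n]\in C$ determines a class $\delta(n)\in H^1(K,A^\vee)$ which, via the natural filtration $H^1(K,A^\vee)\hookrightarrow H^1(K,\mathrm{Pic}(\overline{X}))$ and the Hochschild--Serre description of $\mathrm{Br}_1(X)/\mathrm{Br}(K)$, lifts to a Brauer class $\beta_n\in\mathrm{Br}(X)$. A direct compatibility check then identifies the Brauer--Manin pairing value $\langle\alpha,\beta_n\rangle$, for $\alpha\in\mathrm{CH}_0(X)^0$, with the Tate pairing $\langle\mathrm{alb}_X(\alpha),\delta(n)\rangle$ up to sign. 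Consequently $\delta(n)\in\mathrm{im}(\mathrm{alb}_X)^\perp$ precisely when $\beta_n$ annihilates $\mathrm{CH}_0(X)^0$ under Brauer--Manin. Saito--Sato's non-degeneracy result, which applies precisely thanks to the existence of a smooth projective model, then forces the inclusion of $C$ into $\mathrm{im}(\mathrm{alb}_X)^\perp$ to be an equality. Gabber--de Jong's comparison between cohomological and Azumaya Brauer groups is needed to define local invariants of arbitrary classes in $H^2_{\mathrm{et}}(X,\mathbb{G}_m)$ at closed points, so that the Brauer--Manin pairing is everywhere available on $\mathrm{Br}(X)$.

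\textbf{The $p$-primary claim and main obstacle.} For any prime $\ell\neq p$, the group $A^\vee(\overline{K})$ is $\ell$-divisible (it is the $\overline{K}$-points of an abelian variety in characteristic zero), so the snake lemma applied to multiplication by $\ell$ on the Pic/NS sequence yields an isomorphism $\mathrm{Pic}(\overline{X})/\ell\xrightarrow{\sim}\mathrm{NS}(\overline{X})/\ell$ of $G_K$-modules. Combined with the finiteness of $C$, which follows from $C$ being a finitely generated subgroup of the discrete torsion group $H^1(K,A^\vee)$, a short diagram chase in Galois cohomology yields $C[\ell^\infty]=0$; hence $C$ is a finite $p$-group and, as a quotient of $\mathrm{NS}(\overline{X})^{G_K}$, is realised as a subquotient of $\mathrm{NS}(\overline{X})^{G_K}[p^\infty]$. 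The hardest step of the whole argument is the pairing compatibility in the previous paragraph: one has to align three pairings (Brauer--Manin on $X$, Tate on $A^\vee$, and the $\delta$-connecting pairing from the Pic/NS sequence) with correct signs and no dropped pieces, and deduce equality rather than mere containment of $C$ inside $\mathrm{im}(\mathrm{alb}_X)^\perp$. It is at this point that both the smoothness of the integral model (through Saito--Sato) and Gabber--de Jong's theorem are indispensable.
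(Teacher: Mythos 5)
Your skeleton (Tate duality plus compatibility with the Brauer--Manin pairing plus Saito--Sato) is the paper's skeleton, but the step where you conclude equality $C=\mathrm{im}(\mathrm{alb}_X)^{\perp}$ has a genuine gap: Saito--Sato's non-degeneracy is for the pairing of $\mathrm{CH}_0(X)$ with $\mathrm{Br}(X)/(\mathrm{Br}(K)+\mathrm{Br}(\mathcal{X}))$, not with $\mathrm{Br}(X)/\mathrm{Br}(K)$ --- every class coming from $\mathrm{Br}(\mathcal{X})$ pairs to zero with every closed point, since its restriction extends over $\mathcal{O}_{K(x)}$, whose Brauer group vanishes. So from ``$\beta\in H^1(K,A^\vee)$ is orthogonal to $\mathrm{im}(\mathrm{alb}_X)$'' the pairing compatibility only gives that the image of $\beta$ in $H^1(K,\mathrm{Pic}(\overline{X}))\cong \mathrm{Br}(\overline{X}/X)/\mathrm{Br}(K)$ lies in the image of $\mathrm{Br}(\overline{\mathcal{X}}/\mathcal{X})$, not that it vanishes. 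Showing that this residual group causes no harm (the map $\theta$ of the paper is zero) is the heart of the proof and is entirely absent from your sketch: it uses the cocycle map $\phi_{G_K,\overline{\mathcal{X}}/\mathcal{X}}$ built from Azumaya algebras --- which is where Gabber--de Jong is actually needed, not, as you assert, to define local invariants at closed points (those exist cohomologically) --- together with Grothendieck's injectivity $\mathrm{Br_{Az}}(\mathcal{X})\hookrightarrow\varprojlim_i\mathrm{Br_{Az}}(Z_i)$ (Mittag--Leffler supplied by smoothness of the Picard scheme), a deformation argument killing obstructions in $H^2(Z,\mathcal{O}_Z)$ again via that smoothness, and Lang's theorem over the finite residue field. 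Two further points: your ``natural filtration $H^1(K,A^\vee)\hookrightarrow H^1(K,\mathrm{Pic}(\overline{X}))$'' is not injective (its kernel is exactly $C$), and the ``direct compatibility check'' of the pairings is itself a substantial derived-category argument in the paper (via the Poincar\'e biextension), though leaving that as a black box is under-specification rather than error.

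The $p$-primary claim is also not established by your argument, and cannot be: your inputs ($\ell$-divisibility of $A^\vee(\overline{K})$ for $\ell\neq p$, hence $\mathrm{Pic}(\overline{X})/\ell\cong\mathrm{NS}(\overline{X})/\ell$, and finiteness of $C$) make no use of the integral model, yet they hold for any smooth projective variety over a $p$-adic field while the conclusion fails in general. Concretely, take $p=3$ and a genus-one curve $X/\mathbb{Q}_3$ of period $2$, i.e.\ the torsor attached to an order-$2$ class in $H^1(\mathbb{Q}_3,E)$ (such classes exist, e.g.\ for $E$ with rational $2$-torsion, by Tate duality): then $\mathrm{NS}(\overline{X})^{G_K}=\mathbb{Z}$, the image of $\mathrm{Pic}(\overline{X})^{G_K}$ is $2\mathbb{Z}$, so $C\cong\mathbb{Z}/2$ is finite and prime to $p$, while $\mathrm{Pic}(\overline{X})/2\cong\mathrm{NS}(\overline{X})/2$. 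Hence no diagram chase from your listed inputs can give $C[\ell^\infty]=0$; the hypotheses must enter. The paper's argument does use them: smooth proper base change shows the kernel $P$ of the specialization $\mathrm{NS}(\overline{X})\to\mathrm{NS}(\overline{Z})$ is $p$-primary, smoothness of the Picard scheme plus Hensel's lemma gives surjectivity of $\mathrm{Pic}(X)\to\mathrm{Pic}(Z)$, and Lang's theorem gives $\mathrm{NS}(Z)\cong\mathrm{NS}(\overline{Z})^{G_k}$, whence $\mathrm{NS}(\overline{X})^{G_K}=\mathrm{NS}(X)+P^{G_K}$ and $C$ is a quotient of $P^{G_K}$. (The genus-one example is consistent with the theorem: such an $X$ admits no smooth projective model, by Lang plus Hensel; but it refutes your model-free route.)
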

Here $\mathrm{NS}$ denotes the N\'eron-Severi group: for any variety $Y$ over a field,
$\mathrm{NS}(Y)$ is defined to be the quotient $\mathrm{Pic}(Y)/\mathrm{Pic}^0(Y)$.
Smoothness condition on the Picard scheme is satisfied if
$H^2(X,\mathcal{O}_X)$ and $H^2(Z,\mathcal{O}_Z)$ vanish
(here we set $Z:=\mathcal{X}\otimes _{\mathcal{O}_K} k$).

In the proof of the theorem, we interpret the pairing $(\text{L})$ as 
the Brauer-Manin pairing introduced in \cite{manin}
\begin{equation}
\mathrm{CH}_0(X) \times \mathrm{Br}(X) \to \mathrm{Br}(K) \tag{\text{BM}} .
\end{equation}
This pairing has been used to study zero-cycles on varieties, especially over $p$-adic fields
and number fields.
By non-degeneracy results on the Brauer-Manin pairing due to S. Saito and K. Sato,
together with the Tate pairing, 
we reduce our problem to an injectivity problem concerning Brauer groups.
We solve it using arguments of M. Artin and an existence theorem of Azumaya algebras due to Gabber and de Jong.

\vspace{10pt}
There is another surjectivity result, proved separately:
\begin{theorem}[(\textit{see} Theorem \ref{th2})]
Let $X$ be a smooth projective variety over $K$ with good reduction.
Suppose the ramification index of $K$ over $\mathbb{Q}_p$ is $<p-1$.
Then the Albanese map for $X$
\[
\mathrm{alb}_X : \mathrm{CH}_0(X)^0\to \mathrm{Alb}_X(K)
\]
is surjective.
\end{theorem}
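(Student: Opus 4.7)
The plan is to combine a Bertini argument, reducing the problem to the case of a smooth curve sitting inside $X$ with good reduction, with a formal-group computation that exploits the hypothesis $e<p-1$.

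First I would embed the smooth projective model $\mathcal X \to \mathrm{Spec}(\mathcal O_K)$ in projective space and take iterated smooth hyperplane sections over $\mathcal O_K$, producing a smooth projective relative curve $\mathcal C \subset \mathcal X$. The generic fibre is handled by classical Bertini over the infinite field $K$; the special fibre is handled by Poonen's Bertini theorem over the finite residue field $k$; a common hyperplane over $\mathcal O_K$ satisfying both conditions exists because, inside the parameter space of hyperplanes, the $p$-adic neighbourhood of a Poonen-good reduction meets the Zariski-open locus of generic-fibre Bertini. By weak Lefschetz in $\ell$-adic cohomology applied at each hyperplane step, $H^1_{\mathrm{et}}(\overline X, \mathbb Q_\ell) \hookrightarrow H^1_{\mathrm{et}}(\overline C, \mathbb Q_\ell)$, and hence the induced morphism of abelian varieties $J := J_C \to A := \mathrm{Alb}_X$ is surjective. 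By good reduction it extends to a surjection of abelian schemes $\mathcal J \to \mathcal A$ over $\mathcal O_K$.

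Next I would split the analysis using the reduction exact sequence
\[
0 \to A^0(K) \to A(K) \to \mathcal A(k) \to 0,
\]
where $A^0(K) = \hat{\mathcal A}(\mathfrak m_K)$. The quotient $\mathcal A(k) = \mathrm{Alb}_{X_k}(k)$ is hit by $\mathrm{CH}_0(X_k)^0 \to \mathrm{Alb}_{X_k}(k)$, known to be surjective for $k$ finite (unramified class field theory of Kato--Saito); each smooth closed point of $X_k$ lifts via Hensel's lemma to $\mathcal X$, so every element of $\mathcal A(k)$ is the reduction of the Albanese image of some zero-cycle on $X$ of degree $0$. For the kernel $A^0(K)$ I would use that $\mathcal J \to \mathcal A$ being a surjection of smooth abelian schemes induces a surjection of free $\mathcal O_K$-modules $\mathrm{Lie}(\mathcal J) \to \mathrm{Lie}(\mathcal A)$ (Nakayama after reduction modulo $\mathfrak m_K$, using that the map of Lie algebras of the special-fibre abelian varieties is surjective). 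Under $e<p-1$ the formal logarithm converges and identifies $\hat{\mathcal A}(\mathfrak m_K)$ and $\hat{\mathcal J}(\mathfrak m_K)$ with compatible $\mathbb Z_p$-lattices in the respective Lie algebras tensored with $K$; hence $J^0(K) \twoheadrightarrow A^0(K)$. Combined with the Abel--Jacobi identification $\mathrm{Pic}^0(C)(K) = J(K)$ and pushforward of zero-cycles along $C \hookrightarrow X$, every element of $A^0(K)$ lies in $\mathrm{alb}_X(\mathrm{CH}_0(X)^0)$.

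The snake lemma applied to the reduction sequence then yields the surjectivity of $\mathrm{alb}_X$. I expect the main obstacle to be the Bertini step over $\mathcal O_K$: ensuring that a hyperplane can be chosen so that both generic and special fibres of the hyperplane section remain smooth and geometrically irreducible, and that iterating down to a curve preserves these properties together with good reduction. A secondary but delicate point is to invoke the formal logarithm under $e<p-1$ with full care regarding its functoriality in surjective morphisms of formal groups and the resulting surjectivity on the $\mathbb Z_p$-lattices of Lie-algebra values.
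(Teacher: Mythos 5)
The central gap is in your treatment of $A^0(K)$: you assert that the surjection of abelian schemes $\mathcal J\to\mathcal A$ induces a surjection $\mathrm{Lie}(\mathcal J)\to\mathrm{Lie}(\mathcal A)$ of $\mathcal O_K$-modules ``by Nakayama, using that the map of Lie algebras of the special-fibre abelian varieties is surjective''. In characteristic $p$ a surjective homomorphism of abelian varieties need not be surjective on Lie algebras (Frobenius, isogenies with infinitesimal kernel), and nothing in your argument rules out the scheme-theoretic kernel of $\mathcal J_k\to\mathcal A_k$ acquiring an infinitesimal part even though the generic-fibre kernel is smooth; this is exactly the phenomenon behind the failure of exactness of N\'eron models when $e\ge p-1$. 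So the step you treat as automatic is precisely where the hypothesis $e<p-1$ must enter through a genuine theorem, namely \cite[7.5, Theorem 4 on p.~187]{BLR} (the input of the paper's proof): under $v_K(p)<p-1$ the exact sequence $0\to N\to J_C\to \mathrm{Alb}_X\to 0$ over $K$ induces an exact, in particular smooth, sequence of N\'eron models, whence surjectivity on integral Lie algebras and exactness on special fibres. The convergence of the formal logarithm under $e<p-1$ does not substitute for this: it identifies $\hat{\mathcal A}(\mathfrak m_K)$ with $\mathfrak m_K\otimes\mathrm{Lie}(\mathcal A)$, but the surjectivity of $\hat{\mathcal J}(\mathfrak m_K)\to\hat{\mathcal A}(\mathfrak m_K)$ still hinges on the unproved integral Lie surjectivity. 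Once you do invoke the BLR theorem, your argument essentially collapses onto the paper's, which moreover avoids formal groups entirely: it shows that for each $a\in\mathrm{Alb}_X(K)$ the fibre $J_C\times_{\mathrm{Alb}_X}a$ has a $K$-point, by extending $a$ to an $\mathcal O_K$-section, noting that the special fibre of the corresponding fibre of $\mathcal J\to\mathcal A$ is a torsor under the reduction of $N$, trivializing it by Lang's theorem and lifting by Hensel's lemma.

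There are also secondary gaps. Your handling of the quotient $\mathcal A(k)$ silently identifies $\mathcal A_k$ with $\mathrm{Alb}_{X_k}$ compatibly with the Albanese maps; this specialization statement needs proof, and the paper's route never requires it (it only uses torsors under the reduced kernel, plus the curve). Your appeal to ``$\mathrm{Pic}^0(C)=J_C(K)$'' requires $C$ to have a $K$-rational point (or an index-one argument via Lichtenbaum duality), which your Bertini step does not arrange; the paper first reduces to $X(K)\neq\emptyset$ by a trace argument and then chooses the curve through a rational point. Finally, over the finite residue field smooth \emph{hyperplane} sections need not exist, so the Bertini step must use hypersurface sections of large degree (Poonen, or the Jannsen--Saito Bertini theorem over discrete valuation rings that the paper cites); this is harmless for weak Lefschetz but should be stated.
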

The proof of this theorem is based on 
knowledge about the behavior of N\'{e}ron models
of abelian varieties
under reduction.

\vspace{10pt}
Note that given a smooth projective variety $X$ over a \textbf{number field} $K$
(here ``projective'' can be replaced by ``proper''; \S 2),
we can use Theorem 1.2 to show that the map
\[
\mathrm{CH}_0(X_{K_v})^0 \to \mathrm{Alb}_X(K_v)
\]
is surjective for almost all places $v$ of $K$.
Here $K_v$ denotes the completion of $K$ at $v$.
So we are tempted to consider the map between finite groups
\[
\mathrm{Alb}_X(K)/ \mathrm{alb}_X\mathrm{CH}_0(X)^0 \to
\prod _{v:\text{ all places}}\mathrm{Alb}_X(K_v)/\mathrm{alb}_{X_{K_v}}\mathrm{CH}_0(X_{K_v})^0 .
\]
When $X$ is a curve, its injectivity follows from the Hasse principle for the Brauer group.
An interesting question is whether or not it is injective in the higher dimensional case as well.

\vspace{10pt}
Another generalization of Lichtenbaum's duality concerning the Picard variety
and the Picard group, which is dual to our point of view, was considered comprehensively by van Hamel \cite{hamel}.

\section{Preliminaries}

\subsection{Albanese cokernel: definition and basic 
properties}

We include the definition of the Albanese map
for completeness.

Let $Y$ be a smooth proper variety over a field $F$.
Then there exist an abelian variety $\mathrm{Alb}_Y$ over $F$
and a morphism $\phi : Y\times _F Y\to \mathrm{Alb}_Y$
which satisfy the following property:
$\phi |_{\Delta _Y}$ is the constant map to zero,
where $\Delta _Y \subset Y\times Y$ is the diagonal subscheme;
given a morphism $\psi :Y\times Y\to A$ into an abelian
variety $A$ which is the constant map to zero 
on the diagonal, there is a unique homomorphism of
abelian varieties $f:\mathrm{Alb}_Y\to A$ with
$\psi =f\circ \phi$ \cite[pp.45--46]{lang}.

In this case, $Y$ being proper, 
the Albanese variety $\mathrm{Alb}_Y$ of $Y$ is 
also characterized by the following property
(cf.~\cite[Lemma 2.3]{spacefilling}):
every time we choose an extension field $L$ of $F$ 
and an $L$-valued point $x_0$ of $Y$, 
$(\mathrm{Alb}_Y)_L$ co-represents the functor
\[ \begin{array}{ccccccccc}
\{ \text{Abelian varieties over $L$} \} &\to
 &\{ \text{Sets} \} \\%
A &\mapsto &\mathrm{Hom}_{*}((Y_L,x_0),(A,0)),
\end{array}\]
where $\mathrm{Hom}_*$ denotes the set of 
basepoint-preserving morphism of $L$-schemes. 
In particular there is a universal morphism
$\phi _{x_0}:Y_L\to (\mathrm{Alb}_Y)_L$ sending $x_0$ to $0$.

By Galois descent, there is a torsor $\mathrm{Alb}_Y^1/F$
under $\mathrm{Alb}_Y$ and a morphism
$\phi ':Y\to \mathrm{Alb}_Y^1$ which has a universal 
property:
given a morphism $\psi ':Y\to A'$ into a torsor $A'$
under an abelian variety $A$,
there is a unique homomorphism $f:\mathrm{Alb}_Y
\to A$ and a unique morphism $f':\mathrm{Alb}_Y^1\to A'$
which are compatible.

The torsor $\mathrm{Alb}_Y^1$ is determined by 
an element $[\mathrm{Alb}_Y^1]\in 
H^1_{\mathrm{et}}(F,\mathrm{Alb}_Y)$.
By the canonical isomorphism
$H^1_{\mathrm{et}}(F,\mathrm{Alb}_Y)\cong
\mathrm{Ext}^1_{(\mathrm{Sch}/F)_{\mathrm{et}}}
(\mathbb{Z},\mathrm{Alb}_Y)$,
it corresponds to an extension of group schemes
over $F$
\begin{equation}\label{albscheme}  
0\to \mathrm{Alb}_Y\to A_Y\xrightarrow{\deg} \mathbb{Z}\to 0 
\end{equation}
where $A_Y$
is described explicitly as follows.
Denote by $\mathrm{Alb}_Y^i$ 
the $i$-th power of the torsor $\mathrm{Alb}_Y^1$,
which corresponds to $i\cdot [\mathrm{Alb}_Y^1]
\in H^1_{\mathrm{et}}(F,\mathrm{Alb}_Y)$.
Set $A_Y:= \coprod _{i\in \mathbb{Z}} \mathrm{Alb}_Y^i$.
The morphisms $\mathrm{Alb}_Y^i \times _F \mathrm{Alb}_Y^j
\to (\mathrm{Alb}_Y^i \times _F \mathrm{Alb}_Y^j)/\mathrm{Alb}_Y
=\mathrm{Alb}_Y^{i+j}$
and $\mathrm{Alb}_Y^{-i}\cong (\mathrm{Alb}_Y^i)^{-1}$ 
make $A_Y$ into a group scheme.
Define a map $\mathrm{Alb}_Y\to A_Y$ as the canonical
open and closed immersion to the $0$th component,
and $A_Y\to \mathbb{Z}$ to be the constant map to $i$
on $\mathrm{Alb}_Y^i$.
Being a group scheme with a quasi-projective 
neutral component, 
$A_Y$ has a transfer structure
\cite[Proof of Lemma 3.2]{spsz}, \cite[Lemma 1.3.2]{BV-K}
with respect to which (\ref{albscheme}) is an exact sequence
of \'etale sheaves with transfers.

We define the map $\mathrm{alb}_Y:\mathrm{CH}_0(Y)^0\to \mathrm{Alb}_Y(F)$.
First we define a map 
$\mathrm{alb}_Y:Z_0(Y)\to A_Y(F)$.
This map is determined by determining the image of each
$[x]$, $x\in Y_{(0)}$.
Let $F(x)$ be the residue field of $x$.
There is a canonical element $x\in Y(F(x))$.
Let $\mathrm{alb}_Y([x])$ be the image of $x$ by
\[ Y(F(x))\xrightarrow{\phi '} \mathrm{Alb}_Y^1(F(x))\subset A_Y(F(x))
\xrightarrow{\text{transfer}} A_Y(F).  \]
By the fact that maps in (\ref{albscheme}) are compatible
with transfer structures the following diagram commutes:
\[  \begin{array}{ccccccccc}
Z_0(Y)&\xrightarrow{\deg }&\mathbb{Z} \\%
\downarrow _{\mathrm{alb}_Y}&&\Vert \\%
A_Y(F)&\xrightarrow{\deg}&\mathbb{Z}.
\end{array}\]
Therefore a map 
\[\mathrm{alb}_Y:Z_0(Y)^0\to \mathrm{Alb}_Y(F)\]
between degree $0$ parts is induced.
It is known that it factors through
$\mathrm{CH}_0(Y)^0$. 

\begin{definition}
The cokernel of the map
\[ \mathrm{alb}_Y: \mathrm{CH}_0(Y)^0\to \mathrm{Alb}_Y(F)   \]
is called the {\bf Albanese cokernel} of $Y$.
\end{definition}

The following properties are easy to verify.
\begin{proposition}
The Albanese cokernel of proper smooth varieties satisfies the following:
\begin{enumerate}[(i)]
\item It is trivial if $F$ is an algebraically closed field
or if $F$ a finite field and $Y$ is projective.
\item It is trivial if $Y$ is an abelian variety.
\item Albanese cokernels form a covariant functor from the category of
smooth proper varieties to the category of abelian groups.
\item Albanese cokernels are contravariant with respect to 
base-changes $\mathrm{Spec}F'\to \mathrm{Spec}F$ 
($F'$ is a field) and covariant if $F'/F$ is finite.
Consequently, they are torsion.
\item The Albanese cokernel is finite if $F$ is $\mathbb{R}$, a $p$-adic
field or is a finitely generated field over a prime field.
\end{enumerate}
\end{proposition}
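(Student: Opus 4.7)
I would treat the five items in order, using (i) and (iv) to feed the finiteness assertions in (v).

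For (i), over an algebraically closed $F$, fix $x_0 \in Y(F)$; then $\mathrm{Alb}_Y^1$ has an $F$-point, and by the universal property of the Albanese, $\mathrm{Alb}_Y(F)$ is generated by the image of $\phi_{x_0}:Y(F)\to\mathrm{Alb}_Y(F)$. Any $a=\sum n_i\phi_{x_0}(x_i)$ is the Albanese image of the degree-zero cycle $\sum n_i([x_i]-[x_0])$. For finite $F$ with $Y$ projective, Lang's theorem gives $\mathrm{Alb}_Y^1(F)\ne\emptyset$, and Lang's theorem applied to $\mathrm{Alb}_Y$ itself (together with the transfer structure on closed points of $Y$) yields the same surjectivity. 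For (ii), the universal property identifies $\mathrm{Alb}_A$ with $A$ in such a way that $\mathrm{alb}_A([a]-[0])=a$, immediately giving surjectivity.

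For (iii), a morphism $f:Y\to Y'$ induces proper pushforward on $\mathrm{CH}_0^0$ and, by the universal property, a homomorphism $\mathrm{Alb}(f):\mathrm{Alb}_Y\to\mathrm{Alb}_{Y'}$ compatible with the Albanese maps. For (iv), flat pullback of zero-cycles and base change of Albanese varieties yield contravariance; for a finite extension $F'/F$, proper pushforward and the Galois trace on rational points of abelian $F$-schemes yield covariance, and both compositions equal multiplication by $[F':F]$. The torsion claim then follows from (i) by a colimit argument: for smooth proper $Y$ over $F$, both $\mathrm{CH}_0(Y_{F'})^0$ and $\mathrm{Alb}_Y(F')$ commute with filtered colimits of finite extensions $F'/F$, so by (i) the colimit over $F'\subset \overline{F}$ of the cokernels vanishes. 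Any class over $F$ therefore dies in some finite $F'/F$, and the restriction--trace identity forces it to be $[F':F]$-torsion.

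The real substance is in (v), where the torsion from (iv) must be upgraded to finiteness, with genuinely different arguments per case. For $F=\mathbb{R}$, I would show that the image of $\mathrm{alb}_Y$ contains the identity component of the compact real Lie group $\mathrm{Alb}_Y(\mathbb{R})$, by moving zero-cycles along real points of curves in $Y$, so that the cokernel is a quotient of the finite component group $\pi_0(\mathrm{Alb}_Y(\mathbb{R}))$. For $p$-adic $F$, I would cite the finiteness theorem of Saito and Sujatha \cite{sasu} already quoted in the introduction. For $F$ finitely generated over a prime field, the Mordell--Weil--Lang--N\'eron theorem yields that $\mathrm{Alb}_Y(F)$ is a finitely generated abelian group, so any torsion quotient is finite. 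The main obstacle is the $p$-adic case, which relies on the external Saito--Sujatha input; the real case also requires nontrivial care to see that the image of $\mathrm{alb}$ is open (equivalently, covers the identity component of $\mathrm{Alb}_Y(\mathbb{R})$).
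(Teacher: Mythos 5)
Most of your plan matches the paper's (terse) treatment: (ii)--(iv) are the routine verifications the paper leaves to the reader, your colimit-plus-restriction/trace argument for torsion is the standard one, and for (v) the $p$-adic and finitely generated cases are handled exactly as in the paper (Saito--Sujatha, resp.\ Mordell--Weil--Lang--N\'eron plus torsion). The genuine gap is in (i) for $F$ finite. Your sentence ``Lang's theorem applied to $\mathrm{Alb}_Y$ itself (together with the transfer structure on closed points of $Y$) yields the same surjectivity'' is not an argument: Lang's theorem gives $H^1(\mathbb{F}_q,\mathrm{Alb}_Y)=0$ and hence that $\mathrm{Alb}_Y^1$ has a rational point, but it does not produce any zero-cycle on $Y$ (whose rational points may be scarce or absent) hitting a prescribed element of $\mathrm{Alb}_Y(\mathbb{F}_q)$; the transfer structure only tells you how closed points of higher degree map, it does not tell you that enough of them exist in the right Albanese classes. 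Surjectivity of $\mathrm{CH}_0(Y)^0\to\mathrm{Alb}_Y(\mathbb{F}_q)$ for higher-dimensional $Y$ is a real theorem: the paper cites Kato--Saito's unramified class field theory \cite[Proposition 9(1)]{KS}. If you want a proof rather than a citation, the known routes are themselves nontrivial, e.g.\ find a smooth projective curve $C\subset Y$ (a finite-field Bertini/space-filling-curve input) with $\mathrm{Alb}_C\to\mathrm{Alb}_Y$ surjective with \emph{connected} kernel $N$, and only then does Lang's theorem enter, via $H^1(\mathbb{F}_q,N)=0$ and $\mathrm{Pic}^0(C)\xrightarrow{\sim}J_C(\mathbb{F}_q)$.

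A smaller point on (v) for $F=\mathbb{R}$: your plan to show the image of $\mathrm{alb}_Y$ contains the identity component ``by moving zero-cycles along real points of curves'' is both riskier than necessary (it breaks down if $Y(\mathbb{R})=\emptyset$) and not needed. From your own (iv) applied to $\mathbb{C}/\mathbb{R}$ together with (i) over $\mathbb{C}$, the cokernel is killed by $2$, hence is a quotient of $\mathrm{Alb}_Y(\mathbb{R})/2\mathrm{Alb}_Y(\mathbb{R})$, which is finite because $\mathrm{Alb}_Y(\mathbb{R})$ is a compact commutative Lie group with divisible identity component and finite component group; this is what the paper means by ``knowledge on the structure of commutative Lie groups,'' and it subsumes your openness claim.
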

\begin{proof}
The assertion (i) in the case $F$ is a finite field
follows from Kato-Saito's class field theory \cite[Proposition 9(1)]{KS} where they further describe the kernel of the Albanese map.
The assertion (v) in the case $F$ is a finitely generated
field follows from Mordell-Weil theorem
together with the fact that it is torsion.
The case $F$ is a $p$-adic field follows from arguments in
\cite[p.409]{sasu}. The case $F=\mathbb{R}$ follows from knowledge on the structure of commutative Lie groups.
\end{proof}

\begin{proposition}[(Rigidity)]\label{comphens}
Let $K$ be a Henselian non-archimedean $\mathbb{R}$-valued 
valuation field of characteristic $0$ 
and $\hat{K}$ be its completion. Let $X$ be a smooth proper variety.
Then, the map
\[ \frac{\mathrm{Alb}_X(K)}{\mathrm{alb}_X(\mathrm{CH}_0(X)^0)} 
\to \frac{\mathrm{Alb}_X(\hat{K})}
{\mathrm{alb}_{X_{\hat{K}}}(\mathrm{CH}_0(X_{\hat{K}}))^0 } \]
is an isomorphism.
\end{proposition}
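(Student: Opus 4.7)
The plan is to combine Artin--Greenberg approximation with the openness of the Albanese image in the valuation topology. I equip $\mathrm{Alb}_X(K)$ and $\mathrm{Alb}_X(\hat K)$ with their natural valuation topologies (via an embedding of $\mathrm{Alb}_X$ into projective space). Since $\mathrm{Alb}_X$ is smooth, Greenberg's approximation theorem gives that $\mathrm{Alb}_X(K)$ is dense in $\mathrm{Alb}_X(\hat K)$; more generally, for any finite extension $L/K$ (itself Henselian with completion $\hat L$), $X(L)$ is dense in $X(\hat L)$. The Albanese map is continuous on $L$-points, being induced by a morphism of $K$-schemes.

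The key algebraic input to establish is the openness claim: $\mathrm{alb}_X(\mathrm{CH}_0(X)^0)$ contains an open neighborhood of $0$ in $\mathrm{Alb}_X(K)$, and likewise over $\hat K$. For sufficiently large $n$ the ``difference-sum'' morphism $\Psi\colon (X\times X)^n\to \mathrm{Alb}_X$, $((x_i,y_i))\mapsto \sum_i(\phi(x_i)-\phi(y_i))$, is surjective as a morphism of $K$-schemes and hence smooth on a dense open $U$. A closed point $u\in U$ has finite residue field $M/K$ and gives an $M$-point of $U$ at which $\Psi$ is smooth, so by the implicit function theorem over the Henselian valued field $M$, $\Psi(U(M))$ contains an open neighborhood of $\Psi(u)$ in $\mathrm{Alb}_X(M)$. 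The transfer $N_{M/K}\colon \mathrm{Alb}_X(M)\to \mathrm{Alb}_X(K)$, being surjective on Lie algebras in characteristic $0$, is open near $0$; so $N_{M/K}\Psi(U(M))$ contains an open neighborhood of $N_{M/K}\Psi(u)$ in $\mathrm{Alb}_X(K)$. Since $N_{M/K}\Psi(u)\in \mathrm{alb}_X(\mathrm{CH}_0(X)^0)$, translation yields the desired open neighborhood of $0$.

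Given these two ingredients, \emph{surjectivity} of the comparison map is immediate: for $b\in\mathrm{Alb}_X(\hat K)$, I choose $a\in\mathrm{Alb}_X(K)$ with $b-a$ inside the open neighborhood of $0$ contained in $\mathrm{alb}_{X_{\hat K}}(\mathrm{CH}_0(X_{\hat K})^0)$, giving $b\equiv a$ in the $\hat K$-quotient. For \emph{injectivity}, let $a\in\mathrm{Alb}_X(K)$ equal $\mathrm{alb}_{X_{\hat K}}(\hat\alpha)$ in $\mathrm{Alb}_X(\hat K)$ with $\hat\alpha=\sum n_i[\hat x_i]$. Each closed point $\hat x_i$ has residue field $\hat L_i\cong L_i\otimes_K\hat K$ for a unique finite $L_i/K$ (Henselian--completion equivalence). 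I approximate the corresponding $\hat L_i$-point of $X$ by an $L_i$-point so fine that the resulting closed point $x_i\in X$ has residue field exactly $L_i$ (a dense open condition on $X(L_i)$), and set $\alpha=\sum n_i[x_i]\in\mathrm{CH}_0(X)^0$. By continuity of the Albanese map, $a-\mathrm{alb}_X(\alpha)\in\mathrm{Alb}_X(K)$ is small in $\mathrm{Alb}_X(\hat K)$, hence small in $\mathrm{Alb}_X(K)$ (since the $K$- and $\hat K$-adic topologies agree on $\mathrm{Alb}_X(K)$), hence in $\mathrm{alb}_X(\mathrm{CH}_0(X)^0)$ by openness; thus so is $a$.

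The main obstacle will be making the openness claim rigorous---in particular, verifying that $N_{M/K}$ really is open near $0$ for any finite extension $M/K$, and ensuring the existence of a closed point $u\in U$ with a sufficiently controlled residue field. In the $p$-adic setting openness would follow at once from the finiteness of the Albanese cokernel (Saito--Sujatha), but in the general Henselian setting a direct analytic argument via the implicit function theorem is required.
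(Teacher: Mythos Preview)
Your approach is genuinely different from the paper's. The paper avoids topology entirely: it packages the Albanese cokernel into a homotopy-invariant presheaf with transfers on $\mathrm{Sm}/K$, namely
\[
F(Y)=\mathrm{Alb}_X(Y)/\mathrm{Im}\,h_0(X/K)(Y)^0,
\]
observes that $F$ takes torsion values on fields, and then invokes the rigidity theorem of Rosenschon--{\O}stv{\ae}r for the extension $K\subset\hat K$ to conclude $F_{\mathrm{Zar}}(K)\cong F_{\mathrm{Zar}}(\hat K)$ in one stroke. No openness, no approximation of cycles, no implicit function theorem; injectivity and surjectivity come out simultaneously.

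Your approximation-plus-openness strategy is viable, and is in fact the style of argument the paper itself deploys later in the hypersurface-section proposition (density of $X(L)$ in $X(\hat L)$ via \cite[\S 3.6, Cor.~10]{BLR}, continuity of $\mathrm{alb}$ via its scheme-theoretic realization). The gap you flag is real but fillable: the transfer $N_{M/K}$ is the map on $K$-points of the smooth surjective homomorphism of $K$-group schemes $R_{M/K}(\mathrm{Alb}_X)_M\to\mathrm{Alb}_X$, and over a Henselian valued field smooth morphisms induce open maps on rational points (reduce to the \'etale case and apply Hensel's lemma), so $N_{M/K}$ is open and your argument goes through. What your route buys is a self-contained, elementary argument with no appeal to presheaves with transfers; what the paper's route buys is a two-line proof that is insensitive to discreteness of the valuation or the nature of the residue field, and that does not require separately engineering injectivity and surjectivity.
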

\begin{proof}
We define a homotopy invariant presheaf 
with transfers $F$ on $\mathrm{Sm}/K$
(the category of separated smooth schemes 
of finite type) by
\[
Y \mapsto \mathrm{Alb}_X(Y)/\mathrm{Im}~h_0(X/K)(Y)^0
\]
where $h_0(X/K)$ denotes the functor
\[  \begin{array}{ccccc}
\mathrm{Sm}/K&\to &(\text{Abelian groups}) \\
Y &\mapsto & \mathrm{Cok}\left( \mathbb{Z}_{\mathrm{tr}}(X)(Y\times \mathbb{A}^1)\xrightarrow{d}\mathbb{Z}_{\mathrm{tr}}(X)(Y) \right)
\end{array}\]
($d$ is the difference of the evaluation maps at $0$ and $1\in \mathbb{A}^1$),
and
\[ h_0(X/K)(Y)^0 := \mathrm{ker}\left( h_0(X/K)(Y)
\xrightarrow{\deg } \mathbb{Z}^{\pi _0(Y)}\right) \]
(see \cite{spsz} for the morphism $h_0(X/K)^0
\to \mathrm{Alb}_X$). 

Since $F$ takes torsion values on the spectra of 
fields, its Zariski sheafification
is a torsion abelian presheaf with transfers which 
is homotopy invariant
\cite[Corollary 4.19 and Proposition 4.26]{preth}.
Now the assertion follows by applying the rigidity 
theorem with respect to the extension
$K\subset \hat{K}$
\cite[Theorem 1]{RO} to the sheaf $F_{\mathrm{Zar}}$.
\end{proof}

\begin{proposition}[(Birational invariance)]
Let $Y'\to Y$ be a birational morhpism of smooth 
proper
varieties over $F$. Then it induces an isomorphism
on their Albanese cokernel.
\end{proposition}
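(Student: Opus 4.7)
The plan is to combine two standard facts about birational morphisms of smooth proper varieties with a short diagram chase. Let $f : Y' \to Y$ denote the given birational morphism and fix a dense open subscheme $U \subseteq Y$ on which $f$ is an isomorphism.

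The first ingredient is the birational invariance of the Albanese variety: the induced homomorphism $f_* : \mathrm{Alb}_{Y'} \to \mathrm{Alb}_Y$ is an isomorphism of abelian varieties. This follows from the universal property of the Albanese together with the extension theorem for rational maps from smooth varieties to torsors under abelian varieties: restricting the universal morphism $Y' \to \mathrm{Alb}_{Y'}^1$ to $f^{-1}(U) \cong U$ produces a rational map $Y \to \mathrm{Alb}_{Y'}^1$ which extends to a morphism on all of $Y$, and the universal property of $\mathrm{Alb}_Y^1$ then gives the inverse of $f_*$.

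The second ingredient is the surjectivity of the push-forward $f_* : \mathrm{CH}_0(Y')^0 \to \mathrm{CH}_0(Y)^0$. By the moving lemma for zero-cycles on a smooth variety, every class in $\mathrm{CH}_0(Y)$ is represented by a zero-cycle supported in $U$; such a cycle lifts uniquely to $f^{-1}(U) \subseteq Y'$ and is its own image under $f_*$. Restriction to the degree-zero parts is immediate since $f_*$ preserves degree.

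Combining these with the covariant functoriality of $\mathrm{alb}$ from Proposition 2.2(iii), I obtain a commutative square
\[
\begin{array}{ccc}
\mathrm{CH}_0(Y')^0 & \xrightarrow{\mathrm{alb}_{Y'}} & \mathrm{Alb}_{Y'}(F) \\
\downarrow f_* & & \downarrow \cong \\
\mathrm{CH}_0(Y)^0 & \xrightarrow{\mathrm{alb}_Y} & \mathrm{Alb}_Y(F)
\end{array}
\]
in which the right vertical arrow is an isomorphism and the left vertical arrow is surjective; a short diagram chase then yields that the induced map on cokernels is an isomorphism. The only nontrivial input is the moving lemma for zero-cycles, which is classical in the quasi-projective setting; the general proper case presents the sole technical point requiring care.
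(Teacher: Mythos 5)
Your proof is correct and follows essentially the same route as the paper, which likewise deduces the result from the birational invariance of the Albanese variety and the surjectivity of $\mathrm{CH}_0(Y')^0\to\mathrm{CH}_0(Y)^0$ followed by the same diagram chase (the paper simply quotes these two facts, while you sketch their proofs). No gap; your extra remarks on the moving lemma and the extension of rational maps only make explicit what the paper leaves as standard.
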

\begin{proof}
We know that the Albanese variety is a birational 
invariant
and the map $\mathrm{CH}_0(Y')^0\to \mathrm{CH}_0(Y)^0$ is surjective. 
Therefore the induced map on
the Albanese cokernel is 
a bijection.
\end{proof}

\begin{proposition}[(Hypersurface sections)]
Let $K$ be a Henselian discrete valuation field 
of characteristic $0$
with finite residue field. Let $X\subset \mathbb{P}^n$ 
be a smooth projective 
variety over $K$ of dimension $\ge 3$.
Then there is a smooth hypersurface section $H$ 
of $X$ for which the map
\[
\frac{\mathrm{Alb}_H(K)}{\mathrm{alb}_H\mathrm{CH}_0(H)^0}
\to 
\frac{\mathrm{Alb}_X(K)}{\mathrm{alb}_X\mathrm{CH}_0(X)^0}
\]
is an isomorphism.
\end{proposition}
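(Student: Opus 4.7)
The plan is to exploit weak Lefschetz for the Albanese variety together with a Bertini argument with basepoints. For any smooth hypersurface section $H$ of $X$ with $\dim X \geq 3$, weak Lefschetz applied to $H^1_{\mathrm{et}}(-, \mathbb{Z}/\ell^n)$ in characteristic zero gives an isomorphism $\mathrm{Alb}_H \xrightarrow{\sim} \mathrm{Alb}_X$, and pushforward of $0$-cycles yields an inclusion $\mathrm{alb}_H(\mathrm{CH}_0(H)^0) \subseteq \mathrm{alb}_X(\mathrm{CH}_0(X)^0)$ of subgroups of $\mathrm{Alb}_X(K)$. This already makes the induced map on Albanese cokernels surjective; we are reduced to producing an $H$ for which this inclusion is an equality.

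To pin down such an $H$, take a general complete intersection curve $C \subset X$ of $\dim(X)-1$ hyperplane sections. By iterated weak Lefschetz, $\mathrm{Alb}_C \twoheadrightarrow \mathrm{Alb}_X$ is surjective, and after reducing to the complete case via Proposition \ref{comphens}, the induced map $J_C(K) \to \mathrm{Alb}_X(K)$ has open image $U$. Since $\mathrm{CH}_0(C)^0 = J_C(K)$, the subgroup $U$ is also the image of the composition $\mathrm{CH}_0(C)^0 \to \mathrm{CH}_0(X)^0 \xrightarrow{\mathrm{alb}_X} \mathrm{Alb}_X(K)$. By Saito--Sujatha \cite{sasu}, $\mathrm{Alb}_X(K)/\mathrm{alb}_X(\mathrm{CH}_0(X)^0)$ is finite, hence so is the intermediate quotient $\mathrm{alb}_X(\mathrm{CH}_0(X)^0)/U$; choose $z_1,\ldots,z_k \in \mathrm{CH}_0(X)^0$ whose images represent its cosets, and let $P$ be the (finite) union of the closed points in their supports.

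We now invoke Bertini with basepoints: for $d$ sufficiently large, the generic degree-$d$ hypersurface $F \subset \mathbb{P}^n$ containing the subscheme $C \cup P$ cuts out a smooth divisor $H := X \cap F$ on $X$; since $K$ is an infinite field of characteristic zero, a $K$-rational such $F$ exists by density. With this $H$, the containment $C \subset H$ together with functoriality of Albanese gives $U \subseteq \mathrm{alb}_H(\mathrm{CH}_0(H)^0)$, while $\mathrm{supp}(z_i) \subset H$ makes each $z_i$ a class in $\mathrm{CH}_0(H)^0$ with $\mathrm{alb}_H(z_i) = \mathrm{alb}_X(z_i)$. Combining, $\mathrm{alb}_H(\mathrm{CH}_0(H)^0) \supseteq U + \langle \mathrm{alb}_X(z_i)\rangle_i = \mathrm{alb}_X(\mathrm{CH}_0(X)^0)$, yielding equality and the required isomorphism of Albanese cokernels.

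The main technical obstacle is the Bertini-with-basepoints step: one must verify that for $d \gg 0$ depending on $C$ and $P$, the generic degree-$d$ hypersurface in $\mathbb{P}^n$ through $C \cup P$ meets $X$ transversally both along the curve $C$ and at each isolated point of $P$. Standard arguments handle this once the ideal sheaf of $C \cup P$ in $\mathbb{P}^n$ is $d$-regular in the sense of Castelnuovo--Mumford, so that its degree-$d$ global sections generate enough transverse tangent directions at every point of the base scheme.
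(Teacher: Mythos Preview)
Your strategy shares its core with the paper's---exploit that $\mathrm{alb}(\mathrm{CH}_0^0)$ is open in $\mathrm{Alb}(\hat K)$ and then apply Bertini with basepoints---but the paper dispenses with your auxiliary curve $C$ entirely. It applies the Mattuck/Saito--Sujatha open-subgroup fact \emph{twice}: once to see that $\mathrm{alb}_X(\mathrm{CH}_0(X)^0)$ is topologically generated by finitely many zero-cycles $a_1,\dots,a_n$, and once (implicitly, now for $H$) to see that $\mathrm{alb}_H(\mathrm{CH}_0(H)^0)$ is itself open in $\mathrm{Alb}_H(\hat K)=\mathrm{Alb}_X(\hat K)$. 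Hence any smooth hypersurface section through the supports of the $a_i$ suffices, and Altman--Kleiman \cite[Theorem~(7)]{AK} supplies one through this \emph{zero-dimensional} scheme. Your detour through $C$ forces $H$ to contain a curve, so the Bertini step you flag as the ``main technical obstacle'' is genuinely harder than what the paper needs, with no compensating gain.

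There are also two gaps. The equality $\mathrm{CH}_0(C)^0=J_C(K)$ requires $C$ to carry a degree-$1$ zero-cycle, which a general complete-intersection curve over $K$ need not; this is repairable by replacing $U$ with the (finite-index, hence still open) image of $\mathrm{alb}_C(\mathrm{CH}_0(C)^0)$. More seriously, your passage between $K$ and $\hat K$ is not sound as written: Proposition~\ref{comphens} only identifies the Albanese cokernels over $K$ and $\hat K$, but the hypersurface $H$ must be defined over the original Henselian $K$, whereas the openness of $U$ and the choice of the $z_i$ take place over $\hat K$, so the supports you feed into Bertini are a~priori $\hat K$-points. The paper devotes the bulk of its proof to exactly this descent: once the supporting points are found over $\hat K$, it approximates them by $K$-points using density of $X(K_\alpha)\subset X(\hat K(x_\alpha))$ and continuity of the Albanese map, and only then invokes Bertini over $K$.
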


\begin{proof} 
First suppose $K=\hat{K}$ is a 
$p$-adic field.
Then $\mathrm{alb}_X\mathrm{CH}_0(X)^0\subset \mathrm{Alb}_X(\hat{K})$
 contains an open subgroup $U$
of $\mathrm{Alb}_X(\hat{K})$ isomorphic to a direct sum 
of finitely many copies of $\mathcal{O}_{\hat{K}}$
(cf. \cite[Theorem 7]{mat}, \cite[p.409]{sasu}).
Therefore $\mathrm{alb}_X\mathrm{CH}_0(X)^0$ is topologically 
generated by finitely many zero-cycles
$a_1, \ldots , a_n$.
We choose finitely many closed points
$x_1, \ldots , x_m \in X$ such that 
$a_1,\dots ,a_n$ have supports on $\{ x_1,\dots ,x_m\} $
and choose a smooth hypersurface section $H$ passing through 
these points
(possible by \cite[Theorem (7)]{AK}).
As $\mathrm{dim}(X)\ge 3$, $\mathrm{Alb}_H\to 
\mathrm{Alb}_X$ is an isomorphism.
\[\begin{array}{ccc}
\mathrm{CH}_0(H)^0 &\to &\mathrm{Alb}_H({\hat{K}})\\
\downarrow &&\downarrow \cong\\
\mathrm{CH}_0(X)^0 &\to &\mathrm{Alb}_X({\hat{K}})
\end{array}\]
Since $\mathrm{alb}_H\mathrm{CH}_0(H)^0\subset \mathrm{Alb}_X({\hat{K}})$ 
is an open subgroup of $\mathrm{alb}_X\mathrm{CH}_0(X)^0$
containing $a_1, \ldots , a_n$, it coincides with 
$\mathrm{alb}_X\mathrm{CH}_0(X)^0$. 

Next, we consider the general case. Let $\hat{K}$ be the completion of $K$. Over $\hat{K}$, we have proved the existence of a hypersurface section $H$ with the desired property. By Proposition \ref{comphens}, it suffices to show that such an $H$ can be taken over $K$. For that it is sufficient to find zero-cycles $a_1,\dots ,a_n$ as above which are defined over $K$.
So we are going to modify the cycles $a_i$ obtained above to find zero-cycles $a_i'$ of degree $0$ defined over $K$ such that the elements $\mathrm{alb}_{X}(a_i')\in \mathrm{Alb}_X(K)\subset \mathrm{Alb}_X(\hat{K})$ generate the subgroup $\mathrm{alb}_{X_{\hat{K}}}(\mathrm{CH}_0(X_{\hat{X}})^0)$ topologically.

Observe that if we change each of the elements $\mathrm{alb}(a_i)\in \mathrm{Alb}_X(\hat{K})$ by an element of $pU$ (here $U$ is the open subgroup of $\mathrm{alb}_{X_{\hat{K}}}(\mathrm{CH}_0(X_{\hat{K}})^0)\subset \mathrm{Alb}_X(\hat{K})$ mentioned earlier in this proof), the property that they are a set of topological generators of $\mathrm{alb}(\mathrm{CH}_0(X_{\hat{K}})^0)$ does not change.

Now, for the time being, consider an arbitrary zero-cycle
\[ {a}=\sum _\alpha [x_\alpha ] - \sum _{\beta }[x_{\beta }] \]
on $X$ (or on $X_{\hat{K}}$) of degree $0$. They define a point
\[ \mathbf{x}({a})=((x_\alpha )_{\alpha },(x_{\beta })_{\beta }) \in \prod _{\alpha }X(K(x_{\alpha })) \times \prod _{\beta }X(K(x_{\beta })).  \]
We have the albanese map
\[ \begin{array}{rlccc} \prod _{\alpha }X(K(x_{\alpha })) \times \prod _{\beta }X(K(x_{\beta }))
\longrightarrow
& \prod _{\alpha } \mathrm{Alb}_X^1(K(x_{\alpha }))\times \prod _{\beta } \mathrm{Alb}_X^1(K(x_{\beta })) \\
 \xrightarrow[\sum _{\alpha }(\text{transfer})-\sum _{\beta }(\text{transfer})]{}
& \mathrm{Alb}_X(K)
\end{array} \]
where the addition and subtraction take place in $A_X(K)$, and the map goes into the neutral component $\mathrm{Alb}_X$ because of the degree $0$ assumption. This map sends $\mathbf{x}(a)$ to $\mathrm{alb}_X({a})$. The map is continuous for the valuation topology on both sides because the transfer map is based on scheme morphisms \cite[Lemma 3.2]{spsz}.

Getting back to our situation, let us write 
\[ a_i=\sum _{\alpha \in A_i}[x_\alpha ]-\sum _{\beta \in B_i}[x_{\beta }] .  \]
Then $\mathbf{x}(a_i)$ is a point in $\prod _{\alpha \in A_i}X(\hat{K}(x_{\alpha }))\times \prod _{\beta \in B_i}X(\hat{K}(x_{\beta }))$.
The observation made above and the continuity implies that we may replace each of our $a_i$ (a zero-cycle on $X_{\hat{K}}$) by a zero-cycle represented by a point sufficiently close to $\mathbf{x}(a_i)$. 

Since the extension $\hat{K}(x_\alpha )/\hat{K}$ is separable, the integral closure $K_\alpha $ of $K$ in $\hat{K}(x_{\alpha })$ is dense in $\hat{K}(x_{\alpha })$ so that we have $(K_{\alpha })\hat{~}=\hat{K}(x_{\alpha })$ (and similarly for $\beta $). Then by an approximation theorem \cite[\S 3.6, Corollary 10]{BLR}, the subset 
\[ X(K_{\alpha })\subset X(\hat{K}(x_{\alpha })) \] 
is dense (and similarly for $\beta $). So we can take a point in $\prod _{\alpha \in A_i}X({K}_{\alpha })\times \prod _{\beta \in B_i}X({K}_{\beta })$ sufficiently close to $\mathbf{x}(a_i)$, and we let $a'_i$ be the zero-cycle on $X$ represented by the point. Then the elements 
\[ \mathrm{alb}_X(a'_i)\in \mathrm{Alb}_X(\hat{K}) \]
generate the subgroup $\mathrm{alb}_{X_{\hat{K}}}(\mathrm{CH}_0(X_{\hat{K}})^0)$ topologically.
This completes the proof.
\end{proof}

\subsection{Automorphisms and Brauer groups}\label{sec_auto}

\subsubsection{}\label{auto_gal}
Suppose $f:Y\to X$ is a Galois covering of schemes.
We have the Hochschild-Serre spectral sequence
\[ E^{i,j}_2= H^i(\mathrm{Gal}(Y/X),H^j_\mathrm{et} (Y,\mathbb{G}_m)) 
\Rightarrow H^{i+j}_\mathrm{et}(X,\mathbb{G}_m), \]
from which we get a homomorphism (where we write 
$\mathrm{Br}(Y/X):=\ker \left( \mathrm{Br}(X)\to \mathrm{Br}(Y)\right)$)
\[ E^2_1 = \mathrm{Br}(Y/X) \to E^{1,1}_2=H^1(\mathrm{Gal}(Y/X),\mathrm{Pic}(Y)) .  \]
We will denote it by $\phi _{Y/X}$.

\subsubsection{}
We recall an Azumaya algebra on a scheme $Y$ is by definition
a sheaf $\mathcal{A}$ of $\mathcal{O}_Y$-algebras
(not necessarily commutative) which is a locally free $\mathcal{O}_Y$-module of finite rank
and such that the canonical morphism
\[
\mathcal{A}\otimes _{\mathcal{O}_Y} \mathcal{A}^{\mathrm{op}}\to
\mathcal{E}\mathit{nd}_{\mathcal{O}_Y\text{-mod.}}(\mathcal{A})
\]
is an isomorphism. Two Azumaya algebras $\mathcal{A}$ and $\mathcal{A}'$ are said to be
equivalent if there are vector bundles $\mathcal{V}$ and $\mathcal{V}'$ on $Y$
and an isomorphism of algebras $\mathcal{A}\otimes _{\mathcal{O}_Y}
\mathcal{E}{nd}(\mathcal{V})
\cong \mathcal{A}'\otimes _{\mathcal{O}_Y}\mathcal{E}\mathrm{nd}(\mathcal{V}')$.
The set of equivalence classes of Azumaya algebras form a torsion abelian group
by tensor products. 
Let us denote this group by $\mathrm{Br_{Az}}(Y)$.
There is a canonical injection
$\mathrm{Br_{Az}}(Y) \hookrightarrow \mathrm{Br}(Y)_{\mathrm{tors}}$
where $(-)_{\mathrm{tors}}$ indicates the torsion subgroup.
This is not surjective in general.
Nevertheless we have
\begin{theorem}[({de Jong \cite[Theorem 1.1]{dejong}})]\label{gabthm}
Let $X$ be a quasi-compact separated scheme which admits an ample line bundle.
Then the map $\mathrm{Br_{Az}}(X) \to \mathrm{Br}(X)_{\mathrm{tors}}$ is bijective.
\end{theorem}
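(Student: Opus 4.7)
The plan is to reinterpret the cohomological Brauer class as a $\mu_n$-gerbe and produce a locally free twisted sheaf of finite positive rank on it; the pushforward of its endomorphism algebra will be the desired Azumaya representative. Injectivity of $\mathrm{Br_{Az}}(X)\hookrightarrow \mathrm{Br}(X)_{\mathrm{tors}}$ being already established, the content is surjectivity. First I would make Noetherian reductions: by Noetherian approximation for quasi-compact separated schemes, together with compatibility of both sides with filtered limits with affine transition morphisms, one may assume $X$ is Noetherian, in fact of finite type over $\mathbb{Z}$, while retaining an ample line bundle.

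Fix a class $\alpha\in \mathrm{Br}(X)$ of order $n$. Using the Kummer sequence $1\to \mu_n\to \mathbb{G}_m\xrightarrow{n}\mathbb{G}_m\to 1$ and the resulting surjection $H^2_{\mathrm{et}}(X,\mu_n)\twoheadrightarrow \mathrm{Br}(X)[n]$, I would lift $\alpha$ to a class $\tilde\alpha\in H^2_{\mathrm{et}}(X,\mu_n)$, classifying a $\mu_n$-gerbe $\pi:\mathcal{X}_{\tilde\alpha}\to X$. The key equivalence, checked directly from the local description of Azumaya algebras as twisted forms of matrix algebras, is that Azumaya representatives of $\alpha$ of degree $r$ correspond bijectively to locally free sheaves $\mathcal{V}$ of rank $r$ on $\mathcal{X}_{\tilde\alpha}$ on which the inertial $\mu_n$ acts through the standard character (\emph{$1$-twisted} locally free sheaves); the associated Azumaya algebra is then $\pi_{*}\mathcal{E}\mathrm{nd}_{\mathcal{O}_{\mathcal{X}_{\tilde\alpha}}}(\mathcal{V})$.

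The heart of the proof is therefore to produce at least one locally free $1$-twisted sheaf of finite positive rank. Coherent $1$-twisted sheaves are easy to manufacture, for instance by pushing forward the structure sheaf of a residual gerbe above a closed point of $X$. The ample line bundle on $X$ pulls back to a relative polarization on $\mathcal{X}_{\tilde\alpha}$ and controls the geometry of the relevant moduli of twisted sheaves. The main obstacle, and the real content of de Jong's theorem, is upgrading a coherent twisted sheaf to a locally free one of constant rank. De Jong's approach constructs an appropriate moduli stack of locally free twisted sheaves and proves its non-emptiness by a delicate degeneration argument: one degenerates the gerbe to a situation where locally free twisted sheaves plainly exist (typically after a modification trivializing the gerbe away from a controlled locus), and propagates their existence back via deformation theory and semicontinuity. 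Once such a $\mathcal{V}$ is produced, $\pi_{*}\mathcal{E}\mathrm{nd}(\mathcal{V})$ represents $\alpha$ in $\mathrm{Br_{Az}}(X)$, completing the proof.
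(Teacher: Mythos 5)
The paper does not prove this statement; it imports it verbatim from de Jong's preprint (crediting Gabber), so the only thing to compare your proposal with is the cited argument, whose framework you do reproduce correctly: injectivity is classical, the issue is surjectivity, and the dictionary between degree-$r$ Azumaya representatives of $\alpha$ and rank-$r$ $1$-twisted locally free sheaves on a gerbe, with $\pi_*\mathcal{E}nd(\mathcal{V})$ recovering the algebra, is the right reduction, as is the limit argument bringing one to schemes of finite type over $\mathbb{Z}$. Two points of care even in this formal part: the theorem makes no invertibility assumption on the order $n$, and the \'etale Kummer sequence you invoke is not exact when $n$ is divisible by a residue characteristic, so the lift of $\alpha$ must be taken as an fppf $\mu_n$-gerbe (or one works directly with $\mathbb{G}_m$-cocycles or $\mathbb{G}_m$-gerbes, as the cited source does); and a $1$-twisted sheaf obtained by pushing forward the structure sheaf of a residual gerbe over a closed point is supported at that point, hence of no help --- what is needed is a twisted sheaf of positive rank at \emph{every} point of $X$.

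The genuine gap is that the entire content of the theorem --- the existence of a locally free $1$-twisted sheaf of finite positive rank --- is not argued but outsourced to ``de Jong's delicate degeneration argument''. Since the statement under review \emph{is} de Jong's theorem, this is circular: what you actually carry out is the standard, essentially formal reduction, and the nontrivial step is left as an appeal to the known proof of the very result being proved. Moreover, the description you give of that step (a moduli stack of locally free twisted sheaves, non-emptiness via degenerating the gerbe and propagating back by deformation theory and semicontinuity) is a gesture toward the later period--index literature on moduli of twisted sheaves rather than a completable sketch: no moduli stack is defined, no degeneration is constructed, and none of the deformation-theoretic, properness or semicontinuity statements needed to ``propagate existence back'' are formulated, let alone proved; nor is this an accurate account of the cited note, where the locally free twisted sheaf is produced, after the reduction to finite type over $\mathbb{Z}$, by a direct construction with twisted coherent sheaves exploiting the ample invertible sheaf. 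As written, your proposal establishes the reduction of the theorem to this key existence lemma, but not the lemma, and hence not the theorem.
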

Note that if moreover $X$ is regular and noetherian,
we have $\mathrm{Br_{Az}}(X) = \mathrm{Br}(X)$
since the Brauer group is known to be torsion
for regular noetherian schemes.

\subsubsection{}
Suppose $f:Y\to X$ is a morphism of schemes and an abstract group
$G$ acts on $Y$ (on the right) over $X$.
For $\sigma \in G$, denote by $[\sigma ]:Y\to Y$ the corresponding action;
we have $[\sigma ][\tau ]= [\tau \sigma ]$.
Then $G$ acts on $\mathrm{Pic}(Y)$ on the left by 
$\mathcal{L} \mapsto [\sigma ] ^* \mathcal{L}$ for $\sigma \in G$.

In this case we have a homomorphism
\[ \phi _{G, Y/X}: \mathrm{Br}_{\text{Az}}(Y/X)\to H^1(G,\mathrm{Pic}(Y)) \]
(where $\mathrm{Br}_{\text{Az}}(Y/X)
:=\ker \left(
\mathrm{Br}_{\text{Az}}(X)\to \mathrm{Br}_{\text{Az}}(Y)
\right)$) described as follows.

Let $\omega \in \mathrm{Br}_{Az}(Y/X)$. Take an Azumaya algebra $\mathcal{A}$ on $X$
which represents $\omega$.
There is a vector bundle $\mathcal{E}$ on $Y$ and an isomorphism
$f^* \mathcal{A} \cong \mathcal{E}nd_{\mathcal{O}_Y}(\mathcal{E})$;
pulling it back by $[\sigma ]$ for $\sigma \in G$, we get isomorphisms
\begin{equation}\label{vectorbundles}
\mathcal{E}nd_{\mathcal{O}_Y}([\sigma ]^*\mathcal{E})\cong [\sigma ]^*f^*\mathcal{A}
=f^*\mathcal{A} \cong \mathcal{E}nd_{\mathcal{O}_Y}(\mathcal{E}) .
\end{equation} 
By Morita theory \cite[IV, Proposition 1.3]{KO},
there is a line bundle $\mathcal{L}_\sigma $ on $Y$ and an isomorphism
$[\sigma ]^*\mathcal{E} \cong \mathcal{E}\otimes \mathcal{L}_\sigma$
which induces the isomorphism 
$\mathcal{E}nd_{\mathcal{O}_Y}([\sigma ]^*\mathcal{E})
\cong \mathcal{E}nd_{\mathcal{O}_Y}(\mathcal{E})$;
moreover the choice of $\mathcal{L}_\sigma$ and the isomorphism 
$[\sigma ]^*\mathcal{E} \cong \mathcal{E}\otimes \mathcal{L}_\sigma$
is unique up to a unique isomorphism.
The mapping $(G \ni \sigma \mapsto \mathcal{L}_\sigma \in \mathrm{Pic}(Y))$
is, therefore, a $1$-cocycle of the $G$-module $\mathrm{Pic}(Y)$.
We define $\phi _{G,Y/X}(\omega )$ to be the element it represents in $H^1(G, \mathrm{Pic}(Y))$.
It can be checked that this element does not depend on the choices of
$\mathcal{A}$ and $\mathcal{E}$.

\vspace{10pt}
When we have a diagram of compatible actions
\begin{equation}
\begin{array}{cccccccccc}
G'&\xrightarrow{\rho }&G\\
\circlearrowright&&\circlearrowright\\
Y'&\xrightarrow{r'}&Y\\
\downarrow&&\downarrow\\
X'&\xrightarrow{r}&X
\end{array}
\end{equation}
(i.e. the group $G'$ acts on $Y'$ over $X'$, the group $G$ acts on $Y$ over $X$
and the maps $r$, $r'$ and $\rho$ are compatible in the obvious sense),
we have a commutative diagram of groups
%
\begin{equation}
\begin{array}{cccccc}
\mathrm{Br}_{\text{Az}}(Y'/X')&\leftarrow &\mathrm{Br}_{\text{Az}}(Y/X)\\
&&\\
{}_{\phi _{G',Y'/X'}} \downarrow &&\downarrow 
_{\phi _{G,Y/X}}\\
&&\\
H^1(G',\mathrm{Pic}(Y'))&\leftarrow &H^1(G,\mathrm{Pic}(Y))
\end{array}
\end{equation}
%
where the horizontal maps are the natural functorial ones.

\subsubsection{}\label{twophi}
Suppose $f:Y\to X$ is a Galois covering and an abstract group $G$ acts on
$Y$ through a group homomorphism $G\to \mathrm{Gal}(Y/X)$.
Then we have a commutative diagram
%
\begin{equation}
\begin{array}{ccccccccc}
\mathrm{Br}_{\text{Az}}(Y/X)&\subset &\mathrm{Br}(Y/X)\\
&&\\
{}^{\phi _{G,Y/X}}\downarrow &&\downarrow 
^{\phi _{Y/X}}\\
&&\\
H^1(G,\mathrm{Pic}(Y))&\leftarrow&H^1(\mathrm{Gal}(Y/X),\mathrm{Pic}(Y)) .
\end{array}
\end{equation}
%

\section{Main Theorem}\label{sec_th1}
\begin{theorem}\label{th1}
Let $K$ be a Henselian discrete valuation field of characteristic $0$
with finite residue field $k$,
$X$ be a smooth projective variety over $K$.
Suppose $X$ admits a smooth projective model $\mathcal{X}$ over the integer ring
$\mathcal{O}_K$ whose Picard scheme 
$\mathcal{\bf Pic}_{\mathcal{X}/\mathcal{O}_K}$ over $\mathrm{Spec}(\mathcal{O}_K)$
is smooth.

Then the cokernel of the map 
\[\mathrm{alb}_X: \mathrm{CH}_0(X)^0\to \mathrm{Alb}_X(K) \] 
is canonically isomorphic to the Pontryagin dual of
\[\mathrm{coker}\left( \mathrm{Pic}(\overline{X})^{G_K}
\to\mathrm{NS}(\overline{X})^{G_K} \right) .\]
\end{theorem}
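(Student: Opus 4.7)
The plan is to dualize the statement using Tate's theorem for the abelian variety $\mathrm{Alb}_X$. Since the dual abelian variety of $\mathrm{Alb}_X$ is $\mathrm{Pic}^0_{X/K}$, Tate local duality provides a perfect pairing of topological abelian groups
\[
\mathrm{Alb}_X(K) \times H^1(K, \mathrm{Pic}^0_{X/K}) \longrightarrow \mathrm{Br}(K) = \mathbb{Q}/\mathbb{Z},
\]
so the Pontryagin dual of $\mathrm{coker}(\mathrm{alb}_X)$ is the annihilator $\mathrm{alb}_X(\mathrm{CH}_0(X)^0)^\perp$ in $H^1(K, \mathrm{Pic}^0)$. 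On the other hand, the short exact sequence $0 \to \mathrm{Pic}^0(\overline{X}) \to \mathrm{Pic}(\overline{X}) \to \mathrm{NS}(\overline{X}) \to 0$ of $G_K$-modules yields, via the long exact Galois cohomology sequence, a natural identification
\[
\mathrm{coker}\bigl( \mathrm{Pic}(\overline{X})^{G_K} \to \mathrm{NS}(\overline{X})^{G_K} \bigr) \;\xrightarrow{\sim}\; \ker\bigl( H^1(K, \mathrm{Pic}^0) \to H^1(K, \mathrm{Pic}) \bigr).
\]
The theorem is thereby reduced to showing that this kernel coincides with the annihilator above.

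To bridge these, I would introduce the Brauer group via the Hochschild–Serre spectral sequence for $\overline{X} \to X$. Because $K$ has cohomological dimension $2$, the resulting five-term sequence collapses to an isomorphism $\mathrm{Br}(X)/\mathrm{Br}_0(X) \xrightarrow{\sim} H^1(K, \mathrm{Pic}(\overline{X}))$, where $\mathrm{Br}_0(X)$ is the image of $\mathrm{Br}(K)$. A degree-zero zero-cycle kills $\mathrm{Br}_0(X)$ under the Brauer–Manin pairing (BM), which therefore factors through a well-defined pairing $\mathrm{CH}_0(X)^0 \times H^1(K, \mathrm{Pic}(\overline{X})) \to \mathrm{Br}(K)$. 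The pivotal compatibility—the higher-dimensional counterpart of Lichtenbaum's identification in the curve case—is that precomposing this pairing with $H^1(K, \mathrm{Pic}^0) \to H^1(K, \mathrm{Pic})$ recovers the Tate pairing composed with $\mathrm{alb}_X$. Granted this compatibility, an element $\eta \in H^1(K, \mathrm{Pic}^0)$ lies in $\mathrm{alb}_X(\mathrm{CH}_0(X)^0)^\perp$ exactly when its image in $H^1(K, \mathrm{Pic})$ admits a (BM)-lift $\omega \in \mathrm{Br}(X)$ annihilated by every element of $\mathrm{CH}_0(X)^0$. Elements of $\ker(H^1(K, \mathrm{Pic}^0) \to H^1(K, \mathrm{Pic}))$ satisfy this vacuously; the reverse inclusion is a non-degeneracy statement for (BM).

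The latter is exactly the Saito–Sato non-degeneracy result, which under the smooth model hypothesis on $\mathcal{X}/\mathcal{O}_K$ asserts that a class in $\mathrm{Br}(X)$ killing $\mathrm{CH}_0(X)^0$ lies in $\mathrm{Br}_0(X)$. Since their argument manipulates Azumaya algebras on the integral model, I would invoke Theorem~\ref{gabthm} (Gabber–de Jong) to represent cohomological Brauer classes on $X$ and on $\mathcal{X}$ by Azumaya algebras, using the ample line bundles provided by projectivity. The smoothness of $\mathcal{\bf Pic}_{\mathcal{X}/\mathcal{O}_K}$ ensures that $\mathrm{Pic}^0_{\mathcal{X}/\mathcal{O}_K}$ is itself a smooth (hence Néron) model, so that classes in $H^1(K, \mathrm{Pic}^0)$ admit integral incarnations amenable to Saito–Sato's machinery. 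The hardest step, in my view, will be establishing the compatibility in the middle paragraph: carefully tracking the Hochschild–Serre edge map, the cup-product construction of (BM), and the transfer-based definition of $\mathrm{alb}_X$ recalled in Section 2 so that they cohere; it is this diagram chase that forces the Tate pairing to appear on the dual side and legitimises the subsequent appeal to Saito–Sato.
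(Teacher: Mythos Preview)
Your overall architecture matches the paper's: Tate duality identifies $\mathrm{coker}(\mathrm{alb}_X)^\vee$ with an annihilator in $H^1(K,\mathrm{Pic}^0)$, the Hochschild--Serre edge map identifies $H^1(K,\mathrm{Pic})$ with $\mathrm{Br}(\overline{X}/X)/\mathrm{Br}(K)$, and the compatibility of (BM) with (T) through $\mathrm{alb}_X$ is exactly the paper's Proposition~\ref{prop_comm}. You are also right that this compatibility is the technically delicate step.

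The genuine gap is in your invocation of Saito--Sato. Their theorem does \emph{not} say that a class in $\mathrm{Br}(X)$ orthogonal to $\mathrm{CH}_0(X)^0$ lies in $\mathrm{Br}_0(X)$; it says such a class lies in $\mathrm{Br}(K)+\mathrm{Br}(\mathcal{X})$, i.e.\ the right kernel of (BM) on $\mathrm{Br}(X)/\mathrm{Br}(K)$ is the image of $\mathrm{Br}(\mathcal{X})$. Consequently the annihilator you want is not $\ker\bigl(H^1(K,\mathrm{Pic}^0)\to H^1(K,\mathrm{Pic})\bigr)$ on the nose but the kernel of the composite
\[
\phi:\; H^1(K,\mathrm{Pic}^0)\to H^1(K,\mathrm{Pic})\xrightarrow{\sim}\mathrm{Br}(\overline{X}/X)/\mathrm{Br}(K)\to \mathrm{Br}(X)\big/\bigl[\mathrm{Br}(K)+\mathrm{Br}(\mathcal{X})\bigr],
\]
and there is an a priori overshoot
\[
0\to \mathrm{coker}\bigl(\mathrm{Pic}(\overline{X})^{G_K}\to \mathrm{NS}(\overline{X})^{G_K}\bigr)\to \ker\phi \xrightarrow{\theta} \mathrm{Br}(\overline{\mathcal{X}}/\mathcal{X}).
\]
Proving $\theta=0$ is an entire additional layer that your proposal omits. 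The paper does it by comparing two routes from $\ker\phi$ to $H^1(K,\mathrm{NS}(\overline{Z}))$: the route through $H^1(K,\mathrm{Pic}^0)\to H^1(K,\mathrm{NS})$ is trivially zero, while the route through $\theta$ and the special fibre is shown to be \emph{injective} on $\mathrm{Br}(\overline{\mathcal{X}}/\mathcal{X})$. That injectivity is where Gabber--de~Jong (to pass between $\mathrm{Br}$ and $\mathrm{Br}_{\mathrm{Az}}$ on $\mathcal{X}$) and the smoothness of $\mathbf{Pic}_{\mathcal{X}/\mathcal{O}_K}$ (for the Mittag--Leffler condition in Artin's lemma and the surjectivity of $\mathrm{Pic}(Z_{i+1})\to\mathrm{Pic}(Z_i)$) are actually consumed---not inside Saito--Sato's argument, as you suggest. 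Without this $\theta=0$ step your ``reverse inclusion'' does not follow.
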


%
\begin{remark}\label{rem_th1}
\begin{enumerate}[(1)]
\item The Picard scheme exists in this situation 
(see for example \cite[Theorem 4.8]{kleiman}).

\item By Chebotarev density theorem, the special fiber of $\mathcal{X}$ has a degree 1 zero-cycle.
By the smoothness of $\mathcal{X}$ and the fact that $K$ is Henselian, it lifts to a degree 1 zero-cycle on $X$.
From this, we find that the restriction map
$\mathrm{Br}(K)\to \mathrm{Br}(X)$ is injective and
that its image and the image of the injection 
$\mathrm{Br}(\mathcal{X})\hookrightarrow \mathrm{Br}(X)$
has a trivial intersection.
\end{enumerate}
\end{remark}
%

\paragraph{}
The proof of Theorem \ref{th1} occupies the rest of 
\S \ref{sec_th1},
throughout which we keep the notation in the theorem.

In addition, we use the following notation:
\[ Z=\mathcal{X}\times _{\mathcal{O}_K}k; \]
$\overline{X}$ (resp.~$\overline{Z}$)
= the base change to an algebraic closure of the base field;
\[ \overline{\mathcal{X}}
=\mathcal{X}\otimes _{\mathcal{O}_K}\mathcal{O}_{\overline{K}}. \]

\vspace{10pt}
\begin{remark}\label{cor}
Under the hypothesis of Theorem \ref{th1}, the group
\[ \mathrm{coker}\left( \mathrm{Pic}(\overline{X})^{G_K}
\to \mathrm{NS}(\overline{X})^{G_K} \right)\] 
is a subquotient of $\mathrm{NS}(\overline{X})^{G_K}\{ p \}$,
 the $p$-primary torsion subgroup of
$\mathrm{NS}(\overline{X})^{G_K}$.
Indeed,
by Kummer sequence, for each $n$ prime to $p$ 
we have a commutative diagram
\[ \begin{array}{cccccc}
\mathrm{NS}(\overline{X})/n &\hookrightarrow 
&H^2_{\mathrm{et}}(\overline{X},\mathbb{Z}/n(1)) \\%
\downarrow \mathrm{sp}&&\cong \uparrow \mathrm{cosp}\\%
\mathrm{NS}(\overline{Z})/n &\hookrightarrow 
& H^2_{\mathrm{et}}(\overline{Z},\mathbb{Z}/n(1))
\end{array}\]
where the map $\mathrm{cosp}$ is an isomorphism
by proper smooth base change theorem for \'etale cohomology.
From this we see that the vertical map $\mathrm{sp}$ is injective;
it follows the kernel $P$ of the specialization map 
$\mathrm{NS}(\overline{X})\to \mathrm{NS}(\overline{Z})$
is a $p$-primary torsion abelian group. 
On the other hand we have a commutative diagram
\[ \begin{array}{rll}
&& P^{G_K} \\
&&\rotatebox{90}{$\hookleftarrow$}\\
\mathrm{Pic}(X) &\twoheadrightarrow \mathrm{NS}(X) \hookrightarrow 
&\mathrm{NS}(\overline{X})^{G_K}\\
(*)\rotatebox{90}{$\twoheadleftarrow$}&
&\downarrow\\
\mathrm{Pic}(Z) &\twoheadrightarrow \mathrm{NS}(Z) \xrightarrow[(**)]{\cong} 
&\mathrm{NS}(\overline{Z})^{G_k} \quad .
\end{array}\]
The specialization map $(*)$ is surjective because the Picard scheme is assumed to be smooth
and by Hensel's lemma.
The map $(**)$ is surjective because of $H^1(k,\mathrm{Pic}^0(\overline{Z}))=0$
(Lang's theorem).
From this diagram we see that $\mathrm{NS}(X)+P^{G_K}=\mathrm{NS}(\overline{X})^{G_K}$.
Therefore $\mathrm{NS}(\overline{X})^{G_K}/\mathrm{NS}(X)$ is a quotient of $P^{G_K}$.
\end{remark}

\subsection{Commutative diagrams}

We begin the proof of Theorem \ref{th1}.
By the fact $H^3(K,\mathbb{G}_m)=0$, the homomorphism
$\phi _{\overline{X}/X}$ in \S \ref{auto_gal} induces an isomorphism
\[ \phi _{\overline{X}/X}: 
\mathrm{Br}(\overline{X}/X)/\mathrm{Br}(K) 
\xrightarrow{\cong} H^1(K,\mathrm{Pic}(\overline{X}) ). \]
Denote by $\phi$ the following composite map:
\[\begin{array}{rcl}
H^1(K,\mathrm{Pic^0(\overline{X})})
\to&H^1(K,\mathrm{Pic}(\overline{X}))&\\
&\cong \downarrow \phi _{\overline{X}/X}^{-1} &\\
&\mathrm{Br}(\overline{X}/X)/\mathrm{Br}(K)
&\to \mathrm{Br}(X)/[\mathrm{Br}(K)+\mathrm{Br}(\mathcal{X})]
\end{array}\]

\begin{proposition}[(proved in \S \ref{sec_pf_comm})]\label{prop_comm}
The Brauer--Manin pairing and the Tate pairing are compatible
via the Albanese map and the homomorphism $\phi$:
\[ \begin{array}{cccccccccc}
\mathrm{CH}_0(X)^0&\times 
&\frac{\mathrm{Br}(X)}{\mathrm{Br}(K)+\mathrm{Br}(\mathcal{X})}
&\to &\mathbb{Q}/\mathbb{Z}&& \mathrm{ (BM)}\\
\\
\mathrm{alb}_X\downarrow &&\uparrow \phi &&\| \\
\\
\mathrm{Alb}_X(K)&\times &H^1(K,\mathrm{Pic}^0(\overline{X}))
&\to &\mathbb{Q}/\mathbb{Z} &&\mathrm{ (T)}.
\end{array} \]
\end{proposition}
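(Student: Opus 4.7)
The plan is to reduce, via bilinearity and the functoriality of both pairings under corestriction, to a local identity at a closed point, and then to exploit the universal property of $\mathrm{Alb}_X$ to transport the question to a Brauer class on an abelian variety. First I would fix a $K$-rational point $x_0\in X(K)$, which exists by Remark \ref{rem_th1}(2), and write each zero-cycle of degree $0$ as a sum of pieces of the form $a=[x]-[L:K][x_0]$, where $x$ is a closed point with residue field $L$. Since $x_0^{\ast}:\mathrm{Br}(X)\to\mathrm{Br}(K)$ splits the inclusion $\mathrm{Br}(K)\hookrightarrow\mathrm{Br}(X)$, any lift $\beta\in\mathrm{Br}(X)$ of $\phi(\xi)$ can be normalized so that $x_0^{\ast}\beta=0$. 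With $\phi_{x_0}:X\to A:=\mathrm{Alb}_X$ the Albanese morphism based at $x_0$, one checks $\mathrm{alb}_X(a)=\mathrm{Cor}_{L/K}\phi_{x_0}(x)$ with $x$ viewed as an $L$-point. Using $\mathrm{inv}_K\circ\mathrm{Cor}_{L/K}=\mathrm{inv}_L$ on Brauer groups and the projection formula for the Tate pairing, the proposition reduces to the local identity
\[
x^{\ast}\beta\;=\;\langle \phi_{x_0}(x),\,\xi|_{G_L}\rangle_T \qquad \text{in } \mathrm{Br}(L).
\]

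To prove the local identity I would transport everything from $A$ via $\phi_{x_0}$. Since $A^{\vee}=\mathrm{Pic}^0_{X/K}$, the pullback $\phi_{x_0}^{\ast}$ is an isomorphism $\mathrm{Pic}^0(\overline{A})\xrightarrow{\cong}\mathrm{Pic}^0(\overline{X})$, so there is a unique $\xi'\in H^1(K,\mathrm{Pic}^0(\overline{A}))$ with $\phi_{x_0}^{\ast}\xi'=\xi$. By naturality of the Hochschild--Serre spectral sequence applied to $\overline{A}\to A$ and $\overline{X}\to X$, the map $\phi$ satisfies $\phi(\xi)=\phi_{x_0}^{\ast}\phi_A(\xi')$ in $\mathrm{Br}(X)/(\mathrm{Br}(K)+\mathrm{Br}(\mathcal{X}))$, where $\phi_A:H^1(K,\mathrm{Pic}^0(\overline{A}))\to\mathrm{Br}(A)/\mathrm{Br}(K)$ is the analogue of $\phi$ for the abelian variety $A$. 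I would then choose a representative $\gamma\in\mathrm{Br}(A)$ of $\phi_A(\xi')$ normalized by $0^{\ast}\gamma=0$ (applying the same splitting trick at the origin of $A$) and set $\beta=\phi_{x_0}^{\ast}\gamma$; this preserves $x_0^{\ast}\beta=0$ because $\phi_{x_0}(x_0)=0$, and rewrites the local identity as
\[
(\phi_{x_0}(x))^{\ast}\gamma\;=\;\langle \phi_{x_0}(x),\,\xi'|_{G_L}\rangle_T.
\]

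What is left is the geometric description of the Tate pairing on an abelian variety: for any $y\in A(L)$ and $\eta\in H^1(L,A^{\vee})$, one has $\langle y,\eta\rangle_T = y^{\ast}\gamma$ for any normalized lift $\gamma\in \mathrm{Br}(A_L)$ of $\phi_A(\eta)$. This is standard and follows from the definition of the Tate pairing via the Weil pairing, or equivalently via the Poincar\'e bundle on $A\times A^{\vee}$ combined with Hochschild--Serre for $A_L\to\mathrm{Spec}(L)$; I would cite it (e.g.\ in the form recorded in Milne's Arithmetic Duality Theorems) rather than reprove it.

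The main obstacle is this last, standard but non-trivial, geometric identification of the Tate pairing with evaluation of a normalized Brauer class; the remainder is naturality and book-keeping. A minor point worth recording is that the quotient by $\mathrm{Br}(\mathcal{X})$ creates no difficulty: a closed point of $X$ extends to an $\mathcal{O}_L$-point of $\mathcal{X}$ by the valuative criterion, and $\mathrm{Br}(\mathcal{O}_L)=0$ for the Henselian local ring $\mathcal{O}_L$ with finite residue field, so elements of $\mathrm{Br}(\mathcal{X})$ pair trivially with every zero-cycle on $X$.
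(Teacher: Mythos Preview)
Your outline is sound and would work, but it follows a genuinely different route from the paper. The paper sets up a single tower of Yoneda pairings in $D((\mathrm{Sch}/K)_{\mathrm{et}})$ attached to the filtration $\mathbf{Pic}^0_{X/K}\to\mathbf{Pic}_{X/K}\to\tau_{\ge 1}R\varphi_*\mathbb{G}_m\leftarrow R\varphi_*\mathbb{G}_m$, identifies the top pairing with Brauer--Manin and the bottom with Tate, and then proves that the induced map $Z_0(X)^0\to\mathrm{Alb}_X(K)$ is the Albanese map by a direct computation with the Poincar\'e biextension (Proposition~\ref{albdefA}). You instead reduce by hand, via corestriction and a base point, to an identity on the abelian variety $A=\mathrm{Alb}_X$, and then invoke the ``geometric description of the Tate pairing'' as a known fact. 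Your reduction is lighter, but the fact you cite at the end is precisely the substance of the paper's Proposition~\ref{albdefA}: the statement that $\langle y,\eta\rangle_T=y^*\gamma$ for a normalized $\gamma\in\mathrm{Br}(A)$ lifting $\phi_A(\eta)$ is the biextension description of the Tate pairing unwound through Hochschild--Serre, and Milne's Remark~I.3.5 gives the biextension form rather than this exact formulation. So you have not avoided the hard step, only relocated it to a citation that would still need unpacking.

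One genuine slip: Remark~\ref{rem_th1}(2) gives only a degree~$1$ zero-cycle on $X$, not a $K$-rational point, so your choice of $x_0\in X(K)$ is not justified as written. This is repairable: since the Tate pairing is perfect, both sides of the asserted compatibility define continuous homomorphisms $\mathrm{CH}_0(X)^0\to\mathrm{Alb}_X(K)$, and equality of such maps may be checked after a finite extension $K'/K$ with $X(K')\neq\emptyset$ because $\mathrm{Alb}_X(K)\hookrightarrow\mathrm{Alb}_X(K')$ (the paper makes exactly this reduction in \S\ref{sec_pf_comm}). You should insert this step before fixing $x_0$.
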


Here, the Tate pairing (T) is known to be a perfect pairing of
a compact group and a torsion group \cite{WC}.
On the other hand, Saito and Sato have recently proved:
\begin{theorem}[{(\cite[Theorem 1.1.3]{ss}, applicable by [{\it loc.cit.} Remark 2.1.2])}]
The pairing (BM) is non-degenerate on the right.
\end{theorem}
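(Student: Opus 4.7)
The plan is to translate the Brauer--Manin pairing into a cup-product pairing in \'etale cohomology and then exploit an arithmetic duality on the smooth proper model $\mathcal{X}/\mathcal{O}_K$. Since $\mathrm{Br}(X)$ is torsion, it suffices to establish non-degeneracy on the right one integer $n$ at a time; the strategy below runs smoothly for $n$ prime to $p$, while the $p$-primary component will have to be handled separately with logarithmic Hodge--Witt coefficients.

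First, using the cycle class map $\mathrm{CH}_0(X)/n \to H^{2d}_{\mathrm{et}}(X,\mu_n^{\otimes d})$ (here $d=\dim X$) and the trace isomorphism $H^{2d+2}_{\mathrm{et}}(X,\mu_n^{\otimes (d+1)}) \xrightarrow{\sim}\mathbb{Z}/n$ provided by Tate duality for smooth projective varieties over $K$, I would identify the mod-$n$ reduction of (BM) with a piece of the cup product
\[ H^{2d}_{\mathrm{et}}(X,\mu_n^{\otimes d}) \times H^2_{\mathrm{et}}(X,\mu_n) \longrightarrow \mathbb{Z}/n. \]
Non-degeneracy on the right modulo $n$ becomes the assertion that a class in $\mathrm{Br}(X)[n]$ killed by the image of the cycle map already lies in $(\mathrm{Br}(K)+\mathrm{Br}(\mathcal{X}))[n]$. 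Observe in advance that $\mathrm{Br}(\mathcal{X})$ pairs trivially: any closed point of $X$ extends, by smoothness of $\mathcal{X}$ and Henselian lifting, to a section over $\mathrm{Spec}(\mathcal{O}_{K(x)})$, and the Brauer group of a Henselian DVR with finite residue field vanishes.

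Second, I would pass to $\mathcal{X}$ via the open--closed decomposition $\mathcal{X}=X\sqcup Z$ with $Z=\mathcal{X}\otimes_{\mathcal{O}_K}k$. Absolute purity yields the Gysin long exact sequence
\[ \cdots \to H^{i-2}_{\mathrm{et}}(Z,\mu_n^{\otimes (a-1)}) \to H^i_{\mathrm{et}}(\mathcal{X},\mu_n^{\otimes a}) \to H^i_{\mathrm{et}}(X,\mu_n^{\otimes a}) \to H^{i-1}_{\mathrm{et}}(Z,\mu_n^{\otimes (a-1)}) \to \cdots \]
which in low degree realizes $\mathrm{Br}(\mathcal{X})$ as the unramified part of $\mathrm{Br}(X)$ and produces a residue-style description of the quotient. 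The central ingredient is then an Artin--Verdier/Milne-type arithmetic duality on the smooth proper arithmetic scheme $\mathcal{X}$: a perfect cup product
\[ H^i_{\mathrm{et}}(\mathcal{X},\mu_n^{\otimes a}) \times H^{2d+2-i}_{\mathrm{et}}(\mathcal{X},\mu_n^{\otimes (d+1-a)}) \longrightarrow \mathbb{Z}/n. \]
Playing off the two localization sequences (one in the zero-cycle / $H^{2d}$ direction on $X$, the other in the Brauer direction) against this perfect pairing, and isolating the $\mathrm{Br}(K)$ contribution by means of the degree-one zero-cycle of Remark \ref{rem_th1}(2), should deliver non-degeneracy on the right modulo $n$.

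The hard part will be the $p$-primary component, where \'etale duality with $\mu_{p^r}^{\otimes \bullet}$ coefficients fails over $\mathrm{Spec}(\mathcal{O}_K)$ and must be replaced by duality with the logarithmic Hodge--Witt sheaves $W_r\Omega^\bullet_{\mathrm{log}}$ on the special fibre (and their mixed-characteristic companions). Setting up the correct duality in that setting is the technical core of the argument; once it is available, the same diagram chase applies. A Bertini-type reduction to a regular arithmetic surface, where the two-dimensional class field theory of Kato--Saito pinpoints the kernels directly, is the customary device that finally clinches the argument.
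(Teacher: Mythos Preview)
The paper does not prove this statement at all: it is quoted as an external input from Saito--Sato \cite[Theorem~1.1.3]{ss} (the hypotheses being verified via their Remark~2.1.2), and no argument is supplied here. There is therefore no ``paper's own proof'' to compare your proposal against.

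That said, your outline is broadly in the spirit of how \cite{ss} proceeds: recast (BM) as a cup-product in \'etale cohomology, invoke an arithmetic duality on the regular proper model $\mathcal{X}$ for the prime-to-$p$ part, and treat the $p$-primary part with a $p$-adic coefficient theory (in their work, the $p$-adic \'etale Tate twists $\mathfrak{T}_r(n)$ built from logarithmic Hodge--Witt sheaves). What you have written, however, is an outline rather than a proof. The step you label ``the technical core'' --- constructing the correct mixed-characteristic coefficients and establishing a perfect duality for them on $\mathcal{X}$ --- is essentially the full content of \cite{ss} and its predecessors, and cannot be discharged by a sentence. If the intent were to reprove Saito--Sato's theorem inside this paper, the sketch would need substantial expansion; for the purposes of the present paper, the citation is all that is offered and all that is needed.
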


Therefore we conclude:
\begin{corollary}
The group $\mathrm{coker}(\mathrm{alb}_X)$
is the Pontryagin dual of $\ker (\phi )$.
\end{corollary}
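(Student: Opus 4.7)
The plan is to combine the compatibility of Proposition \ref{prop_comm} with Pontryagin duality applied to the Tate pairing (T) and the right non-degeneracy of (BM). First I would record the topological setup: the Tate pairing (T) identifies the discrete torsion group $H^1(K,\mathrm{Pic}^0(\overline{X}))$ with the Pontryagin dual of the compact group $\mathrm{Alb}_X(K)$, so that closed subgroups of $\mathrm{Alb}_X(K)$ correspond bijectively to subgroups of $H^1(K,\mathrm{Pic}^0(\overline{X}))$ via annihilation.

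Second, I would verify that $I:=\mathrm{alb}_X(\mathrm{CH}_0(X)^0)$ is closed in $\mathrm{Alb}_X(K)$. This is already implicit in the proof of the hypersurface-section proposition: $I$ contains an open subgroup of $\mathrm{Alb}_X(K)$ (cf.~\cite[Theorem 7]{mat}, \cite[p.409]{sasu}), and any subgroup of a topological group containing an open subgroup is itself open, hence closed. In particular $\mathrm{coker}(\mathrm{alb}_X)=\mathrm{Alb}_X(K)/I$ is a finite discrete group, and Pontryagin duality gives a canonical isomorphism
\[
\mathrm{coker}(\mathrm{alb}_X)^{\vee}\;\cong\;I^{\perp}\subset H^1(K,\mathrm{Pic}^0(\overline{X})),
\]
where $I^\perp$ denotes the annihilator with respect to (T).

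Third, I would identify $I^{\perp}$ with $\ker(\phi)$ by invoking Proposition \ref{prop_comm}. For any $\xi\in H^1(K,\mathrm{Pic}^0(\overline{X}))$ and $\alpha\in\mathrm{CH}_0(X)^0$, the commutative diagram there gives
\[
\langle \mathrm{alb}_X(\alpha),\xi\rangle_{\mathrm{T}}\;=\;\langle \alpha,\phi(\xi)\rangle_{\mathrm{BM}}.
\]
Hence $\xi\in I^{\perp}$ iff $\phi(\xi)$ annihilates all of $\mathrm{CH}_0(X)^0$ under (BM). The Saito--Sato theorem that (BM) is non-degenerate on the right then forces $\phi(\xi)=0$, so $I^{\perp}=\ker(\phi)$. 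Combining the two displayed identifications yields $\mathrm{coker}(\mathrm{alb}_X)\cong \ker(\phi)^{\vee}$, proving the corollary.

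No step is genuinely deep here, since the substantive input (Tate duality, Saito--Sato, and the compatibility of (BM) with (T)) has been arranged in advance; the only point requiring care is the topological one of showing $I$ is closed, which must be in place before Pontryagin duality can be invoked to interpret the annihilator as the dual of the cokernel.
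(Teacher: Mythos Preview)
Your proposal is correct and matches the paper's approach exactly. The paper gives no explicit proof of this corollary beyond the words ``Therefore we conclude,'' and your argument---closedness of the image via the open subgroup from \cite{mat}/\cite{sasu}, then Pontryagin duality applied to the perfect Tate pairing together with the Saito--Sato right non-degeneracy and the compatibility of Proposition~\ref{prop_comm}---is precisely the standard unpacking of that inference.
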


By the construction of $\phi$,
we have an exact sequence
\begin{equation}\label{theta}
\begin{array}{rllll}
0 \to &\ker \left( H^1(K,\mathrm{Pic}^0(\overline{X}) ) 
\to H^1(K,\mathrm{Pic}(\overline{X})) \right) \\
\to &\ker \phi 
\xrightarrow[\theta]{} \ker \left(
\mathrm{Br}(\overline{X}/X)/\mathrm{Br}(K)
\to \frac{\mathrm{Br}(X)}{\mathrm{Br}(K)+\mathrm{Br}(\mathcal{X})}
\right)
\end{array} \end{equation}

We have $\ker \left( H^1(K,\mathrm{Pic}^0(\overline{X}) ) 
\to H^1(K,\mathrm{Pic}(\overline{X})) \right) =
\mathrm{coker}\left( \mathrm{Pic}(\overline{X})^{G_K}\to
\mathrm{NS}(\overline{X})^{G_K} \right)$
by the short exact sequence
\[ 0 \to \mathrm{Pic}^0(\overline{X}) \to \mathrm{Pic}(\overline{X}) 
\to \mathrm{NS}(\overline{X}) \to 0. \]
On the other hand we have 
(recall $\overline{\mathcal{X}}:=
\mathcal{X}\otimes _{\mathcal{O}_{K} }  {\mathcal{O}_{\overline{K}} }$)
\[ \begin{array}{rcl}
&&\ker \left(  
\frac{\mathrm{Br}(\overline{X}/X)}{\mathrm{Br}(K)}
\to \frac{\mathrm{Br}(X)}{\mathrm{Br}(K)+\mathrm{Br}(\mathcal{X})}
\right) \\
&=&\mathrm{Br}(\overline{X}/X)\cap \mathrm{Br}(\mathcal{X}) 
\text{ (by Remark \ref{rem_th1}(2))} \\
&=&\mathrm{Br}(\overline{\mathcal{X}}/\mathcal{X}),
\end{array} \]
where the last equality holds because if an element
of $\mathrm{Br}(\mathcal{X})$ is annihilated in $\mathrm{Br}(X_{K'})$
for some finite extention field $K'$ of $K$,
it is annihilated in $\mathrm{Br}(\mathcal{X}_{\mathcal{O}_{K'}})$ too
because the map 
$\mathrm{Br}(\mathcal{X}_{\mathcal{O}_{K'}})\to \mathrm{Br}(X_{K'})$ 
is injective as $\mathcal{X}_{\mathcal{O}_{K'}}$ is
regular by the smoothness assumption on $\mathcal{X}$.

Therefore the sequence (\ref{theta}) takes the form:
\[ 0\to \frac{\mathrm{NS}(\overline{X})^{G_K} }
{\mathrm{Im ~Pic}(\overline{X})^{G_K}}
\to \ker \phi \xrightarrow{\theta }
\mathrm{Br}(\overline{\mathcal{X}}/\mathcal{X}) \]
and hence we are reduced to showing $\theta =0$.

Denote by $\mathrm{sp}$ the specialization maps 
$\mathrm{Pic}(\overline{X})\to \mathrm{Pic}(\overline{Z})$,
$\mathrm{NS}(\overline{X})\to \mathrm{NS}(\overline{Z})$
and the maps induced on their cohomology groups.

\begin{proposition}
The following diagram commutes.
\begin{equation}\label{pentagon}
\xymatrix{
H^1(K,\mathrm{Pic}^0(\overline{X}))\ar[rr]
&
&H^1(K,\mathrm{Pic}(\overline{X}))\ar[r]
&H^1(K,\mathrm{NS}(\overline{X}))\ar[d]^{\mathrm{sp}}
\\%
\ker \phi \ar@{}|{\cup}[u] \ar[r]^{\theta }
&\mathrm{Br}(\overline{\mathcal{X}}/\mathcal{X})
\ar@{}|{\subset }[r] \ar[d]
&\mathrm{Br}(\overline{X}/X)/\mathrm{K} 
\ar[u]^{\phi _{\overline{X}/X}}_{\cong }
&H^1(K,\mathrm{NS}(\overline{Z}))
\\%
{}
&\mathrm{Br}(\overline{Z}/Z) 
\ar[r]^(0.4){\phi _{\overline{Z}/Z}}_(0.4){\cong }
&H^1(k,\mathrm{Pic}(\overline{Z})) \ar[r]
&H^1(k,\mathrm{NS}(\overline{Z})) 
\ar[u]_{\mathrm{inf}}
} \end{equation}
\end{proposition}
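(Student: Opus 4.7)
The plan is to split diagram (\ref{pentagon}) along the edge $\ker\phi\xrightarrow{\theta}\mathrm{Br}(\overline{\mathcal{X}}/\mathcal{X})$ into two pieces: an upper-left triangle which records the construction of $\theta$, and a lower-right hexagon which encodes the naturality of the $\phi$-construction under reduction to the special fibre.

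The upper-left triangle commutes tautologically by the construction of $\theta$ in the sequence (\ref{theta}): for $\alpha\in\ker\phi$, the class $\theta(\alpha)\in\mathrm{Br}(\overline{\mathcal{X}}/\mathcal{X})$ is by definition the preimage under $\phi_{\overline{X}/X}$ of the image of $\alpha$ in $H^1(K,\mathrm{Pic}(\overline{X}))$, and the computation preceding (\ref{theta}) shows that this preimage actually lies in $\mathrm{Br}(\overline{\mathcal{X}}/\mathcal{X})$. Nothing more is required here.

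For the lower-right hexagon, the task reduces --- by naturality of $\mathrm{Pic}\twoheadrightarrow\mathrm{NS}$ under specialization and by transitivity of inflation --- to commutativity of the square with corners $\mathrm{Br}(\overline{\mathcal{X}}/\mathcal{X})$, $H^1(G_K,\mathrm{Pic}(\overline{X}))$, $\mathrm{Br}(\overline{Z}/Z)$, and $H^1(G_K,\mathrm{Pic}(\overline{Z}))$, where the top arrow is $\phi_{\overline{X}/X}$, the bottom is $\mathrm{inf}\circ\phi_{\overline{Z}/Z}$, the left is restriction to the special fibre, and the right is $\mathrm{sp}$. To verify this, I would equip $\overline{Z}$ with a $G_K$-action through the surjection $G_K\twoheadrightarrow G_k$, so that the open/closed immersions $\overline{X}\hookrightarrow\overline{\mathcal{X}}\hookleftarrow\overline{Z}$ become $G_K$-equivariant morphisms. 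The functoriality of $\phi_{G,Y/X}$ established in \S\ref{sec_auto} (the commutative square following the diagram of compatible actions) then yields the desired square but with $\phi_{G_K,\overline{X}/X}$ and $\phi_{G_K,\overline{Z}/Z}$ in place of $\phi_{\overline{X}/X}$ and $\mathrm{inf}\circ\phi_{\overline{Z}/Z}$. Finally \S\ref{twophi} identifies $\phi_{G_K,\overline{X}/X}=\phi_{\overline{X}/X}$ (since $G_K=\mathrm{Gal}(\overline{X}/X)$) and $\phi_{G_K,\overline{Z}/Z}=\mathrm{inf}\circ\phi_{\overline{Z}/Z}$ (since the $G_K$-action on $\overline{Z}$ factors through $G_k=\mathrm{Gal}(\overline{Z}/Z)$), closing the argument.

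The main technical obstacle is matching the specialization map $\mathrm{sp}\colon\mathrm{Pic}(\overline{X})\to\mathrm{Pic}(\overline{Z})$ with the pullback induced by the correspondence $\overline{X}\hookleftarrow\overline{\mathcal{X}}\hookrightarrow\overline{Z}$: we need that every line bundle on $\overline{X}$ extends uniquely to $\overline{\mathcal{X}}$, which is guaranteed by the smoothness (hence regularity) of $\mathcal{X}$ together with the smoothness assumption on the Picard scheme. A secondary point is that \S\ref{sec_auto} is phrased for abstract (finite) groups whereas $G_K$ is profinite; this is handled by representing each Brauer class by an Azumaya algebra split over a finite Galois subextension --- Theorem \ref{gabthm} applies because $\mathcal{X}$, $X$ and $Z$ are all regular noetherian with ample line bundles --- applying the functoriality of \S\ref{sec_auto} to that finite subextension, and then passing to the limit.
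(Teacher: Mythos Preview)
Your proposal is correct and follows essentially the same approach as the paper: split off the left triangle (tautological from the definition of $\theta$), then for the right part factor through the intermediary $\overline{\mathcal{X}}$ carrying its $G_K$-action, invoke Theorem~\ref{gabthm} to replace cohomological Brauer by Azumaya Brauer, apply the functoriality of $\phi_{G,Y/X}$ from \S\ref{sec_auto}, and use \S\ref{twophi} to identify the resulting maps with $\phi_{\overline{X}/X}$ and $\mathrm{inf}\circ\phi_{\overline{Z}/Z}$. The paper makes the node $H^1(K,\mathrm{Pic}(\overline{\mathcal{X}}))$ explicit in its auxiliary diagram, whereas you fold it into the specialization correspondence via the isomorphism $\mathrm{Pic}(\overline{\mathcal{X}})\cong\mathrm{Pic}(\overline{X})$ (your ``main technical obstacle''); this is the same content.
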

\begin{proof}
The left square commutes by the definition of $\theta$.
For the right, we use the map 
$\phi _{G_K,\overline{\mathcal{X}}/\mathcal{X} }
:\mathrm{Br}(\overline{\mathcal{X}}/\mathcal{X})
\to H^1(K,\mathrm{Pic}(\overline{\mathcal{X}}))$.
It was defined in \S \ref{sec_auto} on the subgroup
$\mathrm{Br}_{Az}(\overline{\mathcal{X}}/\mathcal{X})
\subset \mathrm{Br}(\overline{\mathcal{X}}/\mathcal{X})$
and by Theorem \ref{gabthm}, this subgroup is in fact
equal to the entire group.

The commutativity of the following diagram
is obvious in the right half.
In the left half it follows from \S \ref{sec_auto}:

\[ \begin{array}{crcccccccc}
\mathrm{Br}(\overline{X}/X)
&\xrightarrow{\hspace{1cm} \phi _{\overline{X}/X} \hspace{2cm}}
&H^1(K,\mathrm{Pic}(\overline{X}))
&\to 
&H^1(K,\mathrm{NS}(\overline{X})) \\
\cup&\nearrow&
\downarrow \mathrm{sp}
&& \downarrow \mathrm{sp}\\
\mathrm{Br}(\overline{\mathcal{X}}/\mathcal{X})
&\xrightarrow{\phi _{G_K,\overline{\mathcal{X}}/\mathcal{X} } }
H^1(K,\mathrm{Pic}(\overline{\mathcal{X}})) \to
&H^1(K,\mathrm{Pic}(\overline{Z}))
&\to&H^1(K,\mathrm{NS}(\overline{Z})) \\
\downarrow&&\uparrow \mathrm{inf}
&& \uparrow \mathrm{inf} \\
\mathrm{Br}(\overline{Z}/Z)
&\xrightarrow{\hspace{2cm}\phi _{\overline{Z}/Z} \hspace{1cm}}
&H^1(k,\mathrm{Pic}(\overline{Z}))
&\to 
&H^1(k,\mathrm{NS}(\overline{Z})) \\
\end{array} \]

This shows the commutativity of the right half of the diagram (\ref{pentagon}).
\end{proof}

\subsection{Injectivity}

In the diagram (\ref{pentagon}), the upper path from
$\ker \phi$ to $H^1(K,\mathrm{NS}(\overline{Z}))$
is zero. Therefore in order to prove $\theta =0$,
it suffices to prove that the composite map from
$\mathrm{Br}(\overline{\mathcal{X}}/\mathcal{X})$
to $H^1(K,\mathrm{NS}(\overline{Z}))$
along the lower path is injective.

\begin{proposition}
Each of the maps 
\[ H^1(k,\mathrm{Pic}(\overline{Z}))\to 
H^1(k,\mathrm{NS}(\overline{Z}))
\xrightarrow{\mathrm{inf}} H^1(K,\mathrm{NS}(\overline{Z})) \]
is injective.
\end{proposition}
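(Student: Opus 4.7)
The plan is to treat the two maps separately, as they are injective for essentially independent reasons.

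For the first map $H^1(k,\mathrm{Pic}(\overline{Z})) \to H^1(k,\mathrm{NS}(\overline{Z}))$, I would apply Galois cohomology over $G_k$ to the defining short exact sequence
\[ 0 \to \mathrm{Pic}^0(\overline{Z}) \to \mathrm{Pic}(\overline{Z}) \to \mathrm{NS}(\overline{Z}) \to 0. \]
The kernel of the map in question is the image of $H^1(k,\mathrm{Pic}^0(\overline{Z}))$, so it suffices to show that this group vanishes. Here I would invoke Lang's theorem: since the Picard scheme $\mathcal{\bf Pic}_{\mathcal{X}/\mathcal{O}_K}$ is smooth over $\mathcal{O}_K$ by hypothesis, its special fiber $\mathcal{\bf Pic}_{Z/k}$ is a smooth group scheme over the finite field $k$, and the identity component $\mathcal{\bf Pic}^0_{Z/k}$ is a smooth connected commutative algebraic group. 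Lang's theorem then gives $H^1(k, \mathcal{\bf Pic}^0_{Z/k}(\overline{k})) = H^1(k, \mathrm{Pic}^0(\overline{Z})) = 0$, which is precisely what we need. (This is exactly the same ingredient that forced the map labelled $(**)$ in Remark \ref{cor} to be surjective.)

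For the second map, the inflation map $H^1(k, \mathrm{NS}(\overline{Z})) \to H^1(K, \mathrm{NS}(\overline{Z}))$, I would appeal to the five-term inflation-restriction sequence for the surjection $G_K \twoheadrightarrow G_k$ with kernel the inertia subgroup $I_K$:
\[ 0 \to H^1(G_k, \mathrm{NS}(\overline{Z})^{I_K}) \xrightarrow{\mathrm{inf}} H^1(G_K, \mathrm{NS}(\overline{Z})) \to H^1(I_K, \mathrm{NS}(\overline{Z})). \]
The $G_K$-action on $\mathrm{NS}(\overline{Z})$ is pulled back from the $G_k$-action via $G_K \twoheadrightarrow G_k$, so $I_K$ acts trivially and $\mathrm{NS}(\overline{Z})^{I_K} = \mathrm{NS}(\overline{Z})$. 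The injectivity of $\mathrm{inf}$ is then automatic from the exactness of the sequence.

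Neither step poses a real obstacle: the first is a direct consequence of the smoothness hypothesis on the Picard scheme together with Lang's theorem, and the second is a formal property of inflation when the subgroup in question acts trivially. Composing the two injections, the map $\mathrm{Br}(\overline{\mathcal{X}}/\mathcal{X}) \to H^1(K,\mathrm{NS}(\overline{Z}))$ along the lower path of diagram (\ref{pentagon}) is injective (the map $\mathrm{Br}(\overline{Z}/Z) \to H^1(k, \mathrm{Pic}(\overline{Z}))$ being an isomorphism by $\phi_{\overline{Z}/Z}$), completing the reduction: comparing with the upper path of (\ref{pentagon}) which sends $\ker\phi$ to zero, this forces $\theta = 0$ and concludes the proof of Theorem \ref{th1}.
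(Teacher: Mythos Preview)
Your proof is correct and follows the same approach as the paper: Lang's theorem for the first injection, and the general injectivity of inflation on $H^1$ (via the inflation--restriction sequence) for the second. One small caveat: your concluding paragraph claims this already forces $\theta = 0$, but the lower path in diagram~(\ref{pentagon}) also passes through the restriction $\mathrm{Br}(\overline{\mathcal{X}}/\mathcal{X}) \to \mathrm{Br}(\overline{Z}/Z)$, whose injectivity is a separate (and harder) fact established in Proposition~\ref{prop_inj}; without it the argument is incomplete.
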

\begin{proof}
The first one is injective because of Lang's theorem
$H^1(k,\mathrm{Pic}^0(\overline{Z}))=0$.
The fact that the second one is injective follows from the general
fact that the inflation map of group cohomology is always injective
on $H^1$.
\end{proof}
\vspace{10pt}

For a finite extension $K'/K$, we denote the residue field of $K'$
by $k'$ and set
$\mathcal{X}'=\mathcal{X}\otimes _{\mathcal{O}_K}\mathcal{O}_{K'}$
and $Z'=Z \otimes _k k'$.

Since we have
\[ \mathrm{Br}(\overline{\mathcal{X}}/\mathcal{X})
=\bigcup _{K'}
\mathrm{Br}(\mathcal{X}'/{\mathcal{X}}) \]
where $K'$ runs through all the finite subextensions of $\overline{K}/K$, for proving the injectivity of
$\mathrm{Br}(\overline{\mathcal{X}}/\mathcal{X})
\to \mathrm{Br}(\overline{Z}/Z)$
it suffices to prove the injectivity of
$\mathrm{Br}(\mathcal{X}'/\mathcal{X})\to
\mathrm{Br}(Z'/Z)$
for each $K'/K$.

%
\begin{proposition}\label{prop_inj}
Let $K'$ be a finite extension of $K$.
For each integer $i\ge 1$, set $Z_i=\mathcal{X}\otimes _{\mathcal{O}_K} \mathcal{O}_K/\mathfrak{m}_K^i$ and
$Z'_i=Z_i \otimes _{\mathcal{O}_K}\mathcal{O}_{K'}$.
Then the following hold:
\begin{enumerate}[(i)]
\item The map $\mathrm{Br}(\mathcal{X})\to \varprojlim _i \mathrm{Br}(Z_i)$ induced by the restriction maps is injective.
\item The map induced by the projection to the first component
\[ \ker \left( \varprojlim _i \mathrm{Br}(Z_i)\to \varprojlim _i \mathrm{Br}(Z'_i)\right)
\to \ker \left( \mathrm{Br}(Z) \to \mathrm{Br}(Z'_1) \right)
\subset  \mathrm{Br}(Z'/Z) \]
is injective.
\item $\mathrm{Br}(\mathcal{X}'/{\mathcal{X}})
\to \mathrm{Br}(Z'/Z)$
is injective.
\end{enumerate}
\end{proposition}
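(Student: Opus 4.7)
The plan is to derive (iii) from (i) and (ii) by a short diagram chase, and to prove (i) and (ii) by separate arguments of rather different flavor.

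For (iii), suppose $\alpha\in \mathrm{Br}(\mathcal{X}'/\mathcal{X})$ maps to zero in $\mathrm{Br}(Z'/Z)$, i.e.\ $\alpha|_Z=0$ in $\mathrm{Br}(Z)$. The system $(\alpha|_{Z_i})_i\in \varprojlim _i\mathrm{Br}(Z_i)$ lies in $\ker\bigl(\varprojlim \mathrm{Br}(Z_i)\to \varprojlim \mathrm{Br}(Z'_i)\bigr)$ since $\alpha|_{\mathcal{X}'}=0$, and its image under the first-factor projection is $\alpha|_Z=0$ in $\ker(\mathrm{Br}(Z)\to \mathrm{Br}(Z'_1))$. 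Assertion (ii) then forces $(\alpha|_{Z_i})=0$, and assertion (i) then forces $\alpha=0$.

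For (i), I factor the map as
\[
\mathrm{Br}(\mathcal{X})\to \mathrm{Br}(\hat{\mathcal{X}})\to \varprojlim_i \mathrm{Br}(Z_i),
\]
where $\hat{\mathcal{X}}=\mathcal{X}\otimes _{\mathcal{O}_K}\hat{\mathcal{O}}_K$. The first map is injective by proper base change for Henselian pairs, so it suffices to show the second map is injective. Since $\hat{\mathcal{X}}$ is regular Noetherian projective, Theorem \ref{gabthm} represents every class by an Azumaya algebra $\hat{\mathcal{A}}$. If each reduction $\hat{\mathcal{A}}|_{Z_i}$ is trivial, one chooses compatible trivializations $\hat{\mathcal{A}}|_{Z_i}\cong \mathcal{E}\mathit{nd}(\mathcal{V}_i)$ (adjusting the $\mathcal{V}_i$ inductively by suitable line bundle twists, possible by Morita theory as recalled in \S\ref{sec_auto}) and applies Grothendieck's formal existence theorem to algebraize the resulting coherent formal vector bundle to a vector bundle on $\hat{\mathcal{X}}$ trivializing $\hat{\mathcal{A}}$.

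For (ii), I argue by induction on $i$. Given $(\beta_i)$ in the kernel of $\varprojlim \mathrm{Br}(Z_i)\to \varprojlim \mathrm{Br}(Z'_i)$ with $\beta_1=0$, assume $\beta_{i-1}=0$. The exponential short exact sequence
\[
0\to \mathfrak{m}_K^{i-1}\mathcal{O}_{Z_i}\xrightarrow{\exp}\mathcal{O}_{Z_i}^{\times}\to \mathcal{O}_{Z_{i-1}}^{\times}\to 0
\]
and its long exact sequence in \'etale cohomology identify $\beta_i\in \ker(\mathrm{Br}(Z_i)\to \mathrm{Br}(Z_{i-1}))$ with a class in a subquotient of the coherent cohomology group $H^2(Z,\mathfrak{m}_K^{i-1}\mathcal{O}_{Z_i})$. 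Running the parallel analysis on $Z'_i\to Z'_{i-1}$ and comparing the two long exact sequences via the functorial base change reduces the vanishing of $\beta_i$ to the injectivity of the induced map on coherent $H^2$, which in turn follows from flat base change combined with the faithful flatness of $k'/k$.

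The main obstacle is (ii). The delicate point is that the inverse system on the primed side uses the $\mathfrak{m}_K$-adic filtration pulled back from $\mathcal{X}$, which, when $K'/K$ is ramified, refines into several steps of the $\mathfrak{m}_{K'}$-adic filtration on $\mathcal{X}'$; consequently the ``layer'' $\mathfrak{m}_K^{i-1}\mathcal{O}_{Z'_i}$ has length equal to the ramification index rather than one. Correctly identifying the coherent sheaves at each layer and matching the base-change maps on cohomology is what requires most of the technical bookkeeping.
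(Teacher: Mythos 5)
Your reduction of (iii) to (i) and (ii) is exactly right, and invoking Theorem \ref{gabthm} to identify $\mathrm{Br}_{\mathrm{Az}}$ with $\mathrm{Br}$ on the regular projective scheme $\mathcal{X}$ is also the correct move. But both (i) and (ii) as you sketch them have a genuine gap: nowhere do you use the standing hypothesis that $\mathcal{\bf Pic}_{\mathcal{X}/\mathcal{O}_K}$ is smooth, and that hypothesis is precisely what makes both steps work. In (ii), the long exact sequence attached to $0\to \mathfrak{m}_K^{i-1}\mathcal{O}_{Z_i}\to \mathcal{O}_{Z_i}^{*}\to \mathcal{O}_{Z_{i-1}}^{*}\to 1$ reads $\mathrm{Pic}(Z_i)\to \mathrm{Pic}(Z_{i-1})\xrightarrow{\psi_2} H^2(Z,\mathcal{O}_Z)\to \mathrm{Br}(Z_i)\to \mathrm{Br}(Z_{i-1})$, so your class $\beta_i$ lifts to $c\in H^2(Z,\mathcal{O}_Z)$ only modulo $\mathrm{im}\,\psi_2$, and the vanishing of $\beta_i|_{Z'_i}$ only tells you that the image of $c$ lies in $\mathrm{im}\,\psi_2'$. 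Injectivity of the base-change map on coherent $H^2$ (which is indeed easy: $H^2(Z'_1,\mathcal{O}_{Z'_1})\cong H^2(Z,\mathcal{O}_Z)\otimes_k \mathcal{O}_{K'}/\mathfrak{m}_K\mathcal{O}_{K'}$, a split injection) does \emph{not} by itself let you conclude $c\in \mathrm{im}\,\psi_2$; you would need $v^{-1}(\mathrm{im}\,\psi_2')\subset \mathrm{im}\,\psi_2$, which is not automatic. The way to close this is to use smoothness of the Picard scheme: then $\mathrm{Pic}(Z_i)\to\mathrm{Pic}(Z_{i-1})$ and its primed analogue are surjective, so $\psi_2=\psi_2'=0$, $H^2(Z,\mathcal{O}_Z)\hookrightarrow \mathrm{Br}(Z_i)$ and $H^2(Z'_1,\mathcal{O}_{Z'_1})\hookrightarrow\mathrm{Br}(Z'_i)$, and injectivity of $v$ finishes the induction. (Your worry about ramification is a non-issue once you filter $\mathfrak{m}_K$-adically on the primed side as well: the kernel sheaf is simply $\mathcal{O}_{Z'_1}$ in one step, since $\mathcal{O}_{K'}/\mathfrak{m}_K\mathcal{O}_{K'}$ is free over $k$ and $(\mathfrak{m}_K^{i-1}\mathcal{O}_{K'})^2\subset \mathfrak{m}_K^{i}\mathcal{O}_{K'}$.)

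In (i) the same hypothesis is again indispensable and again missing from your sketch: to make the trivializations $\hat{\mathcal{A}}|_{Z_i}\cong\mathcal{E}\mathit{nd}(\mathcal{V}_i)$ compatible, Morita theory only gives $\mathcal{V}_{i+1}|_{Z_i}\cong\mathcal{V}_i\otimes\mathcal{L}_i$ for some line bundle $\mathcal{L}_i$ on $Z_i$, and to correct $\mathcal{V}_{i+1}$ you must lift $\mathcal{L}_i$ to $Z_{i+1}$, i.e.\ you need surjectivity of $\mathrm{Pic}(Z_{i+1})\to\mathrm{Pic}(Z_i)$ (the Mittag-Leffler condition on $(\mathrm{Pic}(Z_i))_i$), which again comes from smoothness of the Picard scheme. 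Moreover your first step, injectivity of $\mathrm{Br}(\mathcal{X})\to\mathrm{Br}(\hat{\mathcal{X}})$ ``by proper base change for Henselian pairs,'' is not a valid citation: proper base change concerns torsion sheaves, $\mathbb{G}_m$ is not torsion, and the Kummer-sequence workaround fails for the $p$-primary part since $p$ is not invertible on $\mathcal{X}$; justifying this step honestly requires, e.g., an Artin approximation argument for trivializations of Azumaya algebras over the excellent Henselian base. The simplest repair is the one used in the paper: quote Grothendieck's result (Le groupe de Brauer III, Lemme 3.3), which is stated directly for a proper flat scheme over an excellent Henselian discrete valuation ring under the Mittag-Leffler hypothesis on $(\mathrm{Pic}(Z_i))_i$, verify that hypothesis from the smoothness of the Picard scheme, and combine with Theorem \ref{gabthm} to pass from $\mathrm{Br}_{\mathrm{Az}}(\mathcal{X})$ to $\mathrm{Br}(\mathcal{X})$; this avoids both the completion step and the algebraization you sketch.
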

%
\begin{proof}
(iii) follows from (i),(ii) and the next diagram.
\[  \begin{array}{ccccccc}
\mathrm{Br}(\mathcal{X})&\to&\varprojlim _i \mathrm{Br}(Z_i)&\to&\mathrm{Br}(Z)\\
\downarrow &&\downarrow&&\downarrow\\
\mathrm{Br}(\mathcal{X}')&\to&\varprojlim _i \mathrm{Br}(Z'_i)&\to&\mathrm{Br}(Z')
\end{array} \]

To prove (i),
we use
\begin{lemma}[{\cite[III, Lemme (3.3)]{BR}}]
Let $f:\mathcal{X}\to Y$ be a proper flat morphism, with $Y$ the spectrum of
an excellent Henselian discrete valuation ring. Let $Y=\mathrm{Spec}(A)$, $\mathfrak{m}$
be the maximal ideal of $A$, $Z_i := Z\otimes _A A/\mathfrak{m}^i$.
Suppose further that the projective system $\left( \mathrm{Pic}(Z_i) \right) _i$
satisfies the Mittag-Leffler condition. Then the canonical morphism
\[
\mathrm{Br_{Az}}(\mathcal{X}) \to \varprojlim _i \mathrm{Br_{Az}}(Z_i)
\]
is injective.
\end{lemma}
We can apply this lemma to our situation $\mathcal{X}\to \mathrm{Spec}(\mathcal{O}_K)$;
the Mittag-Leffler condition is fulfilled by the smoothness assumption on the Picard scheme.

Therefore in the diagram:
\[
\begin{array}{cccccccc}
\mathrm{Br}_{Az}(\mathcal{X})&\xrightarrow{(*)}&\varprojlim _i \mathrm{Br_{Az}}(Z_i)\\
\rotatebox{-90}{$\subset$}_{(**)}&&\rotatebox{-90}{$\subset$}\\
\mathrm{Br}(\mathcal{X})&\xrightarrow{(***)} &\varprojlim _i\mathrm{Br}(Z_i)
\end{array}
\]
the map $(*)$ is injective and the inclusion $(**)$ is in fact an equality by Theorem \ref{gabthm}.
It follows that the map $(***)$ is also injective.
This proves Proposition \ref{prop_inj}(i).

We show (ii).
From the exact sequence of sheaves on the topological space $Z$:
\[\begin{array}{rcl}
0 \to &\mathfrak{m}^i\mathcal{O}_{Z_{i+1}}& \xrightarrow{1+} 
\mathcal{O}^*_{Z_{i+1}} \to \mathcal{O}^*_{Z_i}\to 1\\
&\rotatebox{90}{$\cong$}&\\
&\mathcal{O}_Z&
\end{array}\]
where the isomorphism is specified if we choose a generator of 
$\mathfrak{m}$.
There is a similar one on $Z'$:
\[\begin{array}{rcl}
0 \to &\mathfrak{m}^i\mathcal{O}_{Z'_{i+1}}& \xrightarrow{1+} 
\mathcal{O}^*_{Z'_{i+1}} \to \mathcal{O}^*_{Z'_i}\to 1\\
&\rotatebox{90}{$\cong$}&\\
&\mathcal{O}_{Z'_1}&
\end{array}\]
From these, one obtains a commutative diagram
\[\begin{array}{ccccccccc}
\mathrm{Pic}(Z_{i+1})&\xrightarrow{\psi _1} &\mathrm{Pic}(Z_i)&\xrightarrow{\psi _2} 
&H^2(Z,\mathcal{O}_Z)
&\to &\mathrm{Br}(Z_{i+1})&\to &\mathrm{Br}(Z_i) \\
\downarrow &&\downarrow &&\downarrow v&&\downarrow &&\downarrow \\
\mathrm{Pic}(Z'_{i+1})&\xrightarrow{\psi '_1} &\mathrm{Pic}(Z'_i)&\xrightarrow{\psi '_2} 
&H^2(Z'_1,\mathcal{O}_{Z'_1})
&\to &\mathrm{Br}(Z'_{i+1})&\to &\mathrm{Br}(Z'_i) .
\end{array}\]
The maps $\psi _1$ and $\psi '_1$ are surjective by the assumption 
that the Picard scheme is smooth.
Therefore $\psi _2$ and $\psi '_2$ are zero maps.
The vertical map $v$ is injective by flat base change theorem for coherent cohomology.
So the group
$\mathrm{Br}(Z'_{i+1}/Z_{i+1})$
injects into $\mathrm{Br}(Z'_{i}/Z_{i})$, 
hence into $\mathrm{Br}(Z'_{1}/Z)$.

This complete the proof.
\end{proof}

In order to prove Theorem \ref{th1}, it remains to show Proposition \ref{prop_comm}.
This is done in {\S}\ref{sec_pf_comm}.

\begin{remark}
Proposition \ref{prop_inj}(iii) implies the relation
$\mathrm{Br}(\mathcal{X}'/\mathcal{X})\subset \mathrm{Br}(\mathcal{X}'^{\text{ur}}/\mathcal{X})$ where $\mathcal{X}'^{\text{ur}}$ denotes the base change of $\mathcal{X}$ to the maximal unramified subextension of $\mathcal{O}_{K'}/\mathcal{O}_K$.
In view of \cite[Theorem 1.1.3, Remark 2.1.2]{ss} again,
this means that any element of 
$\mathrm{Br}(X)$ which vanishes
at every closed point and in $\mathrm{Br}(\overline{X})$ is trivialized by an unramified
extension of $K$.
It would be interesting to ask if this holds 
without hypotheses in Theorem \ref{th1}.
\end{remark}

\subsection{Proof of Proposition \ref{prop_comm}}\label{sec_pf_comm}
For schemes $Y$, we use the big \'etale site
$(\mathrm{Sch}/Y)_{\mathrm{et}}$,
whose underlying category is the category of all schemes locally of finite type
over $Y$ and the coverings are \'etale surjective morphisms.
Denote by $D((\mathrm{Sch}/Y)_{\mathrm{et}})$ the derived category of complexes of abelian \'etale sheaves.



For $K$-schemes $\varphi :Y\to \mathrm{Spec}K$,
we will consider the groups $\mathrm{Hom}_{D((\mathrm{Sch}/K)_\mathrm{et})}(R\varphi _* \mathbb{G}_m,\mathbb{G}_m)$. They form a covariant functor in $K$-schemes $Y$.
\begin{proposition}\label{point}
For any finite field extension 
$\varphi _{K'} :\mathrm{Spec}K' \to \mathrm{Spec}K$,
there is a canonical isomorphism
\[ \mathrm{Hom}_{D((\mathrm{Sch}/K)_{\mathrm{et}})}(R\varphi _{K' *} \mathbb{G}_m
,\mathbb{G}_m)
=\mathbb{Z}. \]
\end{proposition}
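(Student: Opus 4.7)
The plan is to reduce the derived Hom to an ordinary sheaf Hom and then compute this as the character group of a Weil restriction.

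Since $K$ has characteristic zero, $K'/K$ is separable and $\varphi_{K'}$ is finite \'etale. Higher \'etale direct images along finite morphisms vanish, so $R\varphi_{K'*}\mathbb{G}_m$ is concentrated in degree $0$ and coincides with $\varphi_{K'*}\mathbb{G}_m$. Consequently
\[
\mathrm{Hom}_{D((\mathrm{Sch}/K)_{\mathrm{et}})}(R\varphi_{K'*}\mathbb{G}_m,\mathbb{G}_m)
=\mathrm{Hom}_{\mathrm{Shv}((\mathrm{Sch}/K)_{\mathrm{et}})}(\varphi_{K'*}\mathbb{G}_m,\mathbb{G}_m),
\]
and I work with the right-hand side from here on.

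Next I would identify the sheaf $\varphi_{K'*}\mathbb{G}_m$ with the Weil restriction $T:=\mathrm{Res}_{K'/K}(\mathbb{G}_{m,K'})$, a smooth affine commutative $K$-group scheme of dimension $[K':K]$: by the definition of pushforward it sends $Y\to\mathrm{Spec}\,K$ to $\Gamma(Y\times_K K',\mathcal{O}^*)$, which is precisely the functor of points of $T$. By the Yoneda lemma, a morphism of abelian \'etale sheaves $T\to \mathbb{G}_{m,K}$ is the same as a character of $T$ defined over $K$, so the Hom in question equals the character group $X^*(T)=\mathrm{Hom}_{K\text{-grp}}(T,\mathbb{G}_{m,K})$.

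Finally I would compute $X^*(T)$ by Galois descent. After base change to $\overline{K}$ the Weil restriction splits canonically as $T_{\overline{K}}\cong\prod_{\sigma}\mathbb{G}_{m,\overline{K}}$, with $\sigma$ ranging over the set $\mathrm{Hom}_K(K',\overline{K})$, and $G_K$ acts on the indexing set by permuting embeddings. The character lattice of this split torus is $\mathbb{Z}^{\mathrm{Hom}_K(K',\overline{K})}$ with the induced permutation action of $G_K$. Since $G_K$ acts transitively on $\mathrm{Hom}_K(K',\overline{K})$, the invariants form a diagonal copy of $\mathbb{Z}$, and its generator descends to the norm map $N_{K'/K}:T\to\mathbb{G}_{m,K}$. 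This yields the canonical isomorphism $\mathbb{Z}\xrightarrow{\sim}X^*(T)$ sending $n\mapsto n\cdot N_{K'/K}$.

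There is no serious obstacle: the proof is essentially formal once the reduction to a representable sheaf is made, the only substantive ingredient being the elementary fact $\mathrm{End}(\mathbb{G}_m)=\mathbb{Z}$ and its Galois-equivariant extension to split tori. The naturality of the isomorphism in $K'$, needed for this to constitute a statement of functors, follows from the functoriality of Weil restrictions and of the norm.
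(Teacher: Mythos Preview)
Your proof is correct and follows essentially the same route as the paper: reduce $R\varphi_{K'*}\mathbb{G}_m$ to the Weil restriction using finiteness, identify sheaf Homs with $K$-group scheme homomorphisms into $\mathbb{G}_m$, split over $\overline{K}$ and take $G_K$-invariants of the resulting permutation lattice, with the generator corresponding to the norm. The only differences are cosmetic --- you spell out the Yoneda step and the transitivity argument a bit more explicitly than the paper does.
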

\begin{proof}
Since $\varphi _{K'}$ is finite, we have $R\varphi _{K' *} \mathbb{G}_m
= \varphi _{K' *}\mathbb{G}_m=R_{K'/K}\mathbb{G}_m$ (the Weil restriction).
Therefore the group $\mathrm{Hom}_{D((\mathrm{Sch}/K)_\mathrm{et})}(R\varphi _{K' *} 
\mathbb{G}_m,\mathbb{G}_m)$
equals to the group of homomorphisms 
\[ R_{K'/K}\mathbb{G}_m\to \mathbb{G}_m \]
of group schemes over $K$.
Giving such a morphism is equivalent to giving a morphism
over $\overline{K}$
\[ \prod _{\mathrm{Hom}_K(K',\overline{K})}\mathbb{G}_m\to \mathbb{G}_m \]
which is invariant under the action of $G_K$, which acts on 
$\prod \mathbb{G}_m$ by permutation of components 
and trivially on the right hand side.
Since the set of homomorphisms from the $\overline{K}$-group scheme
$\mathbb{G}_m$ to itself is 
canonically isomorphic to
$\mathbb{Z}$, the assertion follows.
Explicitly, $1\in \mathbb{Z}$ corresponds to the
norm map $R_{K'/K}\mathbb{G}_{m,K'}\to \mathbb{G}_{m,K}$.
\end{proof}

Let $\varphi :X\to \mathrm{Spec}K$ as in Theorem \ref{th1}
and $i_x: x\to X$ be a closed point.
Write 
$\varphi _x=\varphi \circ i_x: x\to \mathrm{Spec}K$.
From the map $\mathbb{G}_m \to i_{x *}\mathbb{G}_m$
we get a map (the first equality is due to Proposition \ref{point})
\[ \mathbb{Z}=\mathrm{Hom}_{D((\mathrm{Sch}/K)_\mathrm{et})}(R\varphi _{x *} \mathbb{G}_m
,\mathbb{G}_m) \to 
\mathrm{Hom}_{D((\mathrm{Sch}/K)_\mathrm{et})}(R\varphi _* \mathbb{G}_m
,\mathbb{G}_m). \]
Thus we get a map
\[ \mathrm{cl}:Z_0(X) \to \mathrm{Hom}_{D((\mathrm{Sch}/K)_\mathrm{et})}(R\varphi _* \mathbb{G}_m
,\mathbb{G}_m) \]
which is known to factor through $\mathrm{CH}_0(X)$
\cite[Proposition 3.2]{hamel}.

By functoriality we have a commutative diagram
\[\begin{array}{cccccc}
Z_0(X)
&\xrightarrow{\deg}
&\mathbb{Z}=Z_0(\mathrm{Spec}K) \\
\mathrm{cl}\downarrow&&\mathrm{cl}\downarrow \rotatebox{90}{$=$}\\
\mathrm{Hom}_{D((\mathrm{Sch}/K)_{\mathrm{et}})}(R\varphi _*\mathbb{G}_m,\mathbb{G}_m)
&
\xrightarrow{\deg}
&\mathbb{Z}=\mathrm{Hom}_{D((\mathrm{Sch}/K)_\mathrm{et})}(\mathbb{G}_m, \mathbb{G}_m)
\end{array}\]
So we get an induced map between kernels
\[ \mathrm{cl}:Z_0(X)^0\to 
\mathrm{Hom}_{D((\mathrm{Sch}/K)_{\mathrm{et}})}(\tau _{\ge 1}R\varphi _*\mathbb{G}_m,\mathbb{G}_m)   \]
where we put $Z_0(X)^0=\ker \left( Z_0(X) \xrightarrow{\deg }\mathbb{Z} \right)$.

%
%
There is a commutative diagram of Yoneda pairings
obtained from maps
\[ \mathbf{Pic}^0_{X/K}\to \mathbf{Pic}_{X/K}
\to \tau _{\ge 1}R\varphi _*\mathbb{G}_m
\leftarrow R\varphi _*\mathbb{G}_m : \]
\[\begin{array}{cccccc}
&&&H^2_{\mathrm{et}}(K,\mathbb{G}_m)&&\\
&&&\downarrow&&\\
(\mathrm{BM})'&\mathrm{Hom}_{D((\mathrm{Sch}/K)_\mathrm{et})}(R\varphi _*\mathbb{G}_m,\mathbb{G}_m)&\times
&\mathbb{H}^2_\mathrm{et}(K,R\varphi _*\mathbb{G}_m)&\to
&H^2_\mathrm{et}(K,\mathbb{G}_m)\vspace{1mm}\\
&\uparrow&&{~}\downarrow \eta&&\rotatebox{90}{$=$}\vspace{1mm}\\
&\mathrm{Hom}_{D((\mathrm{Sch}/K)_\mathrm{et})}(\tau _{\ge 1}R\varphi _*\mathbb{G}_m,\mathbb{G}_m)&\times
&\mathbb{H}^2_\mathrm{et}(K,\tau _{\ge 1}R\varphi _*\mathbb{G}_m)&\to
&H^2_\mathrm{et}(K,\mathbb{G}_m)\vspace{1mm}\\
&\downarrow&&{~}\uparrow \phi '&&\rotatebox{90}{$=$}\vspace{1mm}\\
&\mathrm{Hom}_{D((\mathrm{Sch}/K)_\mathrm{et})}(\mathbf{Pic}_{X/K}[-1],\mathbb{G}_m)&\times
&\mathbb{H}^2_\mathrm{et}(K,\mathbf{Pic}_{X/K}[-1])&\to&H^2_\mathrm{et}(K,\mathbb{G}_m)\vspace{1mm}\\
&\downarrow&&\uparrow&&\rotatebox{90}{$=$}\vspace{1mm}\\
(\mathrm{T})'
&\mathrm{Hom}_{D((\mathrm{Sch}/K)_\mathrm{et})}(\mathbf{Pic}^0_{X/K}[-1],\mathbb{G}_m)&\times
&\mathbb{H}^2_\mathrm{et}(K,\mathbf{Pic}^0_{X/K})[-1])&\to
&H^2_\mathrm{et}(K,\mathbb{G}_m)
\end{array}\]

By 
\begin{gather*}
H^2_\mathrm{et}(K,\mathbb{G}_m)=\mathrm{Br}(K)=\mathbb{Q}/\mathbb{Z}, \\
\mathbb{H}^2_\mathrm{et}(K,R\varphi _*\mathbb{G}_m)
=H^2_\mathrm{et}(X,\mathbb{G}_m)=\mathrm{Br}(X)
\text{ and }\\
\mathrm{Hom}_{D((\mathrm{Sch}/K)_\mathrm{et})}(\mathbf{Pic}^0_{X/K}[-1],\mathbb{G}_m)
=\mathrm{Alb}_X(K), \end{gather*}
this is rewritten as

\[\begin{array}{cccccc}
&&&\mathrm{Br}(K)&&\\
&&&\downarrow&&\\
(\mathrm{BM})'&\mathrm{Hom}_{D((\mathrm{Sch}/K)_\mathrm{et})}(R\varphi _*\mathbb{G}_m,\mathbb{G}_m)&\times
&\mathrm{Br}(X)&\to
&\mathbb{Q}/\mathbb{Z}\vspace{1mm}\\
&\uparrow&&{~}\downarrow \eta&&\rotatebox{90}{$=$}\vspace{1mm}\\
(\mathrm{BM})''
&\mathrm{Hom}_{D((\mathrm{Sch}/K)_\mathrm{et})}(\tau _{\ge 1}R\varphi _*\mathbb{G}_m,\mathbb{G}_m)&\times
&\mathbb{H}^2_\mathrm{et}(K,\tau _{\ge 1}R\varphi _*\mathbb{G}_m)&\to
&\mathbb{Q}/\mathbb{Z}\vspace{1mm}\\
&\downarrow&&{~}\uparrow \phi '&&\rotatebox{90}{$=$}\vspace{1mm}\\
&\mathrm{Hom}_{D((\mathrm{Sch}/K)_\mathrm{et})}(\mathrm{Pic}(\overline{X})[-1],\mathbb{G}_m)&\times
&H^1_{\mathrm{Gal}}(K,\mathrm{Pic}(\overline{X}))&\to&\mathbb{Q}/\mathbb{Z}\vspace{1mm}\\
&\downarrow&&\uparrow&&\rotatebox{90}{$=$}\vspace{1mm}\\
(\mathrm{T})'
&\mathrm{Alb}_X(K)&\times
&H^1_{\mathrm{Gal}}(K,\mathrm{Pic}^0(\overline{X}))&\to
&\mathbb{Q}/\mathbb{Z}
\end{array}\]

The pairing $(\mathrm{BM})'$ can be seen to give the Brauer-Manin pairing when composed
with the map 
$\mathrm{CH}_0(X)\to \mathrm{Hom}_{D((\mathrm{Sch}/K)_{\mathrm{et}})}(R\varphi _*\mathbb{G}_m,\mathbb{G}_m)$.
The map $\eta$ is surjective because $H^3_{\mathrm{Gal}}(K,\mathbb{G}_m)=0$, so $\eta$ induces an isomorphism
\begin{equation}\label{brbr}
 \mathbb{H}^2_\mathrm{et}(K,\tau _{\ge 1}R\varphi _*\mathbb{G}_m)
=\mathrm{Br}(X)/\mathrm{Br}(K).
\end{equation}
Hence the pairing $(\mathrm{BM})''$,
when composed with the map 
\[ Z_0(X)^0\to 
\mathrm{Hom}_{D((\mathrm{Sch}/K)_{\mathrm{et}})}(\tau _{\ge 1}R\varphi _*\mathbb{G}_m,\mathbb{G}_m),\]
gives the Brauer-Manin pairing
\[ Z_0(X)^0\times \mathrm{Br}(X) / \mathrm{Br}(K) \to \mathbb{Q}/\mathbb{Z}. \]
The pairing $(\mathrm{T})'$ is the same as the Tate pairing as explained in
\cite[Remark 3.5 in Chapter I]{mil}.

Thus we get the commutative diagram:
\begin{equation}\label{pairings}
\begin{array}{crcccccc}
(\mathrm{BM})
&Z_0(X)^0&\times
&\mathrm{Br}(X)/\mathrm{Br}(K)&\to
&\mathbb{Q}/\mathbb{Z}\\
&&&{~}\uparrow \phi '&&\rotatebox{90}{$=$}\\
&a\downarrow&
&H^1_{\mathrm{Gal}}(K,\mathrm{Pic}(\overline{X}))&\to&\mathbb{Q}/\mathbb{Z}\\
&&&\uparrow&&\rotatebox{90}{$=$}\\
(\mathrm{T})
&\mathrm{Alb}_X(K)&\times
&H^1_{\mathrm{Gal}}(K,\mathrm{Pic}^0(\overline{X}))&\to
&\mathbb{Q}/\mathbb{Z}
\end{array}\end{equation}

We have to show that the map $a$ is equal to the
Albanese map
and the map $\phi '$ is the inverse of
$\phi _{\overline{X}/X}$.
The latter can be checked if one notices that 
the Hochschild-Serre spectral sequence is realized as the one associated
with the tower
\[  R\Gamma (\overline{X},\mathbb{G}_m)\to 
\tau _{\ge 1}R\Gamma (\overline{X},\mathbb{G}_m)\to 
\tau _{\ge 2}R\Gamma (\overline{X},\mathbb{G}_m)\to \dots  \]
in the derived category of
$G_K$-modules, together with triangles
\[ R^{n}\Gamma (\overline{X},\mathbb{G}_m)[-n]
\to \tau _{\ge n}R\Gamma (\overline{X},\mathbb{G}_m)
\to \tau _{\ge n+1}R\Gamma (\overline{X},\mathbb{G}_m)\xrightarrow{+1}.  \]

\subsubsection{}

We will check that the map
\[ a:Z_0(X)^0\to \mathrm{Hom}_{D\left( (\mathrm{Sch}/K)_{\mathrm{et}}\right) }(\mathbf{Pic}_X^0[-1],\mathbb{G}_m)
=\mathrm{Alb}_X(K)  \]
in (\ref{pairings})
is equal to the Albanese map $\mathrm{alb}_X$.

Note that for any field extension $K'/K$
the map $\mathrm{Alb}_X(K)\to \mathrm{Alb}_X(K')$
is injective, and hence it suffices to show the equality after an arbitrary field extension.
In particular we can assume $X$ is given a base point
$x_0\in X(K)$.

There is a sheafified version of the map $a$.
Suppose we are given two morphisms
$f_1,f_2$ from a $K$-scheme $Y$ to $X$.
Since the composite
$Y\xrightarrow{\Gamma _{f_i}} Y\times X \xrightarrow{\pi :=\mathrm{pr}_1}Y ~(i=1,2)$
is the identity morphism,
the composite
$\mathbb{G}_{m,Y}\xrightarrow{\pi ^\# }R\pi _*\mathbb{G}_{m,Y\times X}
\xrightarrow{\Gamma _i^\# } \mathbb{G}_{m,Y}$
is the identity morphism.
Note that $\pi ^\# $ induces an isomorphism
$\mathbb{G}_{m,Y}\cong \pi _*\mathbb{G}_{m,Y\times X}$.Therefore the map
$\Gamma _{f_1}^\# -\Gamma _{f_2}^\# :R\pi _*\mathbb{G}_{m,Y\times X}\to \mathbb{G}_{m,Y}$
induces a map
$\Gamma _{f_1}^\# -\Gamma _{f_2}^\# :\tau _{\ge 1}R\pi _*\mathbb{G}_{m,Y\times X}\to \mathbb{G}_{m,Y}$.
Composed with
$(\mathbf{Pic}_X^0)_Y [-1]\hookrightarrow
R^1\pi _* \mathbb{G}_{m,Y\times X} [-1]\to \tau _{\ge 1}R\pi _* \mathbb{G}_{m,Y\times X}$,
it gives 
\[ (\mathbf{Pic}_X^0)_Y[-1]\to \mathbb{G}_{m,Y}  \]
i.e.\ an element of 
$\mathrm{Ext}^1_{(\mathrm{Sch}/Y)_{\mathrm{et}}}
((\mathbf{Pic}_X^0)_Y,\mathbb{G}_{m,Y})$.
Thus we have defined a map of sets
\[ (X\times _K X)(Y)\to \mathrm{Ext}^1_{(\mathrm{Sch}/Y)_{\mathrm{et}}}
((\mathbf{Pic}_X^0)_Y,\mathbb{G}_{m,Y}).   \]
(Here, $\mathrm{Ext}^1_{(\mathrm{Sch}/Y)_{\mathrm{et}}}(-,-)$ denotes the $\mathrm{Ext}^1$ computed in the category of abelian sheaves on $(\mathrm{Sch}/Y)_{\mathrm{et}}$. The same applies below.)
These are organized to define a map of sheaves
\[ b: X\times _K X\to \mathcal{E}xt^1_{(\mathrm{Sch}/K)_{\mathrm{et}}}
(\mathbf{Pic}_X^0,\mathbb{G}_{m})(=\mathrm{Alb}_X)  \]
(resp.\ $b_{*}: X\to \mathrm{Alb}_X$ if $X$ is pointed, putting
$f_2=$ the constant map to $x_0$).

To prove the claimed equality of maps,
it suffices to show that the map
$b:X\times _K X \to \mathrm{Alb}_X$ just defined
is the Albanese map.
By the universality of $\mathrm{Alb}_X$,
it suffices to show the map $b_*$ for
$(X,x_0)=(\mathrm{Alb}_X,0)$ is the identity map of 
$\mathrm{Alb}_X$.

Thus the next general proposition is enough for us to conclude.
\begin{proposition}\label{albdefA}
Let $A$ be an abelian variety over a field $F$
and $P$ its dual abelian variety.
Then the map
\[ b_*: A\to \mathcal{E}xt^1_{(\mathrm{Sch}/K)_{\mathrm{et}}}(P,\mathbb{G}_m)  \]
(here $A$ is pointed by $0$) is the same map as 
the one induced by the Poincar\'e sheaf $\mathcal{P}$
on $A\times P$.
\end{proposition}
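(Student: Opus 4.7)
The plan is to compare both morphisms as elements of $\widehat{P}(A)$ via Yoneda, and then verify the comparison by evaluating the resulting extensions at a common test point of $P_A$.

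By Yoneda, a morphism of $F$-schemes $A \to \widehat{P} = \mathcal{E}xt^1(P, \mathbb{G}_m)$ is determined by its value on $\mathrm{id}_A \in A(A)$, which lies in $\widehat{P}(A) = \mathrm{Ext}^1_{(\mathrm{Sch}/A)_{\mathrm{et}}}(P_A, \mathbb{G}_{m,A})$. Under the relative Barsotti--Weil formula this $\mathrm{Ext}^1$ group is naturally identified with the group of isomorphism classes of line bundles on $A \times_F P$ rigidified along $A \times \{0\}$ and fibrewise in $\mathrm{Pic}^0(P)$. By construction the Poincar\'e-induced map sends $\mathrm{id}_A$ to the class of $\mathcal{P}$; hence the proposition reduces to the statement that $b_*(\mathrm{id}_A)$ also corresponds to $\mathcal{P}$ under Barsotti--Weil.

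To compute $b_*(\mathrm{id}_A)$ I specialize the construction of $b_*$ to $Y = A$, $f_1 = \mathrm{id}_A$ and $f_2 = 0$, so that the two sections of $\pi = \mathrm{pr}_1 : A \times_F A \to A$ are the diagonal $\Delta$ and $\iota_0 := (\mathrm{id}, 0)$. The difference $\Delta^{\sharp} - \iota_0^{\sharp}: R\pi_* \mathbb{G}_m \to \mathbb{G}_{m, A}$ kills $\pi_* \mathbb{G}_m \cong \mathbb{G}_{m, A}$ and so factors through $\tau_{\geq 1} R\pi_* \mathbb{G}_m$; precomposing with $P_A[-1] \hookrightarrow R^1\pi_* \mathbb{G}_m[-1] \to \tau_{\geq 1} R\pi_* \mathbb{G}_m$ produces the class $b_*(\mathrm{id}_A)$. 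I then unwind what this extension does on a test $U$-point $\lambda: U \to P_A$, which under Barsotti--Weil corresponds to the line bundle $M := (\mathrm{id}_A \times \lambda)^* \mathcal{P}$ on $A \times_F U$, fibrewise in $\mathrm{Pic}^0$. The fibre of the extension over $\lambda$ comes out to be the $\mathbb{G}_m$-torsor $\Delta_U^* M \otimes (\iota_{0, U}^* M)^{-1}$ on $U$, where $\Delta_U, \iota_{0, U}: U \to A \times_F U$ are the pullbacks of the two sections.

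Once that identification is in hand the key calculation is immediate: $\Delta_U^* M = (\alpha, \lambda)^* \mathcal{P}$, where $\alpha: U \to A$ is the structure map, while $\iota_{0, U}^* M = (0_U, \lambda)^* \mathcal{P}$ is trivial by the Poincar\'e normalization $\mathcal{P}|_{\{0\} \times P} \cong \mathcal{O}_P$. Hence the fibre of $b_*(\mathrm{id}_A)$ over $\lambda$ equals $(\alpha, \lambda)^* \mathcal{P}$, which is exactly the value of the Poincar\'e extension on $\lambda$. Letting $U$ and $\lambda$ vary identifies the two extensions. The main obstacle in the proof is the intermediate identification that the $b_*$-extension evaluated at $\lambda$ is indeed $\Delta_U^* M \otimes \iota_{0, U}^* M^{-1}$; this is a standard but slightly delicate unwinding of the connecting morphism in the Leray spectral sequence for $\pi$, essentially the content of the relative Barsotti--Weil isomorphism itself, and can be carried out as in \cite[Ch.~I, \S 3]{mil}.
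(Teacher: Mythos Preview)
Your argument is correct and is essentially the paper's own proof, reorganized: both reduce the comparison of $b_*$ and $b_{\mathcal P}$ to evaluating the resulting extension of $P$ by $\mathbb G_m$ on test points and identifying the fibre with $(\alpha,\lambda)^*\mathcal P$, the only technical point being the unwinding of the connecting map (your reference to \cite{mil}, the paper's \S\ref{Pic-compat} and final Lemma). The one cosmetic difference is that you first invoke Yoneda to sit over the universal case $Y=A$, whereas the paper carries a general $f:Y\to A$ through the derived-category diagrams before arriving at the same pointwise check.
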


Here, the Poincar\'e sheaf $\mathcal{P}$ is a biextension of $A\times P$ by $\mathbb{G}_m$
---
a $\mathbb{G}_m$-torsor on $A\times P$
which is given a structure of an extension
of the $A$-group $P_A=A\times P$ by $\mathbb{G}_{m,A}$
\begin{equation}\label{extA}
 0\to \mathbb{G}_{m,A}\to \mathcal{P} \to P_A\to 0  
 \end{equation}
and a structure of an extension of the $P$-group $A_P=A\times P$ by $\mathbb{G}_{m,P}$
\begin{equation}\label{extP} 
0\to \mathbb{G}_{m,P}\to \mathcal{P} \to A_P\to 0   
\end{equation}
---
which is characterized by the property that
for each $F$-scheme $Y$ the map 
\begin{equation}\label{Pic-sch} 
\begin{array}{ccrccc}
P(Y)&\to &\mathbf{Pic}_A^0(Y)=\{ L\in \mathrm{Pic}(A\times Y,0\times Y) | \forall y\in Y ~~L_y \text{ is}\\
 &&\text{algebraically equivalent to $0$}  \} \\
{}\\
(f:Y\to P)&\mapsto &\text{pull-back of the $\mathbb{G}_m$-torsor $\mathcal{P}$ on $A\times P$ by the map } \\
&&\text{$\mathrm{id}\times f:A\times Y\to A\times P$}
\end{array}  \end{equation}
is bijective, where for a scheme $V$ and a closed subscheme $Z$ of $V$, $\mathrm{Pic}(V,Z)$ denotes
the group of isomorphism classes of pairs $(L, \varphi )$, $L$ being a line bundle over $V$
and $\varphi$ being an isomorphism $\varphi :L|_Z
\xrightarrow{\cong} \mathcal{O}_Z$.

In the statement of Proposition \ref{albdefA},
``the map induced by the Poincar\'e sheaf''
refers to the map which associates to
a morphism $f:Y\to A$ the pull-back of
(\ref{extA}) by it.
Let us denote the map by $b_\mathcal{P}$.

The proof of Proposition \ref{albdefA} will be completed in
{\S}\ref{pf-albdef}.

\subsubsection{}\label{Pic-compat}
Here we recall a basic compatibility.
For any scheme $Y$ there are canonical isomorphisms
\[ H^i_{\mathrm{et}}(Y,\mathbb{G}_m)\cong 
\mathrm{Ext}^i_{(\mathrm{Sch}/Y)_{\mathrm{et}}}(\mathbb{Z}_Y,
\mathbb{G}_{m,Y}) \]
and the one for $i=1$ is described as follows:
given an extension
\[ 0\to \mathbb{G}_{m,Y}\to E\to \mathbb{Z}_Y\to 0,  \]
the corresponding $\mathbb{G}_m$-torsor is given by
the pull-back of $E$ to 
$Y\times \{ 1 \} \subset Y\times \mathbb{Z}$.
Conversely, given a $\mathbb{G}_m$-torsor $E'$ on $Y$,
the corresponding extension is given by
$\coprod _{i\in \mathbb{Z}} E'^i$,
$E'^i$ being the $i$-th power of $E'$ as a $\mathbb{G}_m$-torsor.

\paragraph{}
Now let $Y$ be an arbitrary $F$-scheme and 
$f:Y\to A$ an $F$-morphism.
By pull-back, we have a $\mathbb{G}_m$-torsor
$\mathcal{P}\times _A Y$ on $Y\times P$
and an extension of $Y$-groups
\begin{equation}\label{extY}
 0\to \mathbb{G}_{m,Y}\to \mathcal{P}\times _AY \to
P_Y\to 0  \end{equation}
equivalently, a morphism in $D((\mathrm{Sch}/Y)_{\mathrm{et}})$
\begin{equation}\label{PtoG1}
 P_Y\to \mathbb{G}_{m,Y}[1]   .
 \end{equation}

Suppose given an $F$-morphism $g:Y\to P$.
One checks the following elements (i)--(iv)
 of $H^1(Y,\mathbb{G}_m)=\mathrm{Ext}^1(\mathbb{Z}_Y,\mathbb{G}_{m,Y})$ are the same:
\begin{enumerate}[(i)]
\item The image of $g$ by the connecting homomorphism
\[ P_Y(Y)\to H^1(Y,\mathbb{G}_m) \]
arising from (\ref{extY}).
What is the same, the image of $g$ by the map
\[ H^0(Y,P_Y)\to H^0(Y,\mathbb{G}_{m,Y})=H^1(Y,\mathbb{G}_{m,Y}) \]
arising from (\ref{PtoG1}).
\item The image of $(\mathcal{P}_Y,g)$ by the Yoneda pairing
\[ \mathrm{Ext}^1(P_Y,\mathrm{G}_{m,Y})\times
H^0(Y,P_Y)\to H^1(Y,\mathbb{G}_m).
\]
What is the same, the image of $(\mathcal{P}_Y,g)$
by the Yoneda pairing
\[ \mathrm{Ext}^1(P_Y,\mathbb{G}_{m,Y})\times
\mathrm{Hom}_{(\mathrm{Sch}/Y)_{\mathrm{et}}}(\mathbb{Z}_Y,P_Y)\to \mathrm{Ext}^1(\mathbb{Z}_Y,\mathbb{G}_m)  . \]
%
\item The pull-back of the $\mathbb{G}_m$-torsor 
$\mathcal{P}\times _A Y$ on $Y\times P$ by
$\Gamma _g:Y\to Y\times P$.
What is the same, the pull-back of the extension
(\ref{extY}) by the map $g:\mathbb{Z}_Y\to P_Y$.
\item The pull-back of the $\mathbb{G}_m$-torsor
$\mathcal{P}$ on $A\times P$ by the map 
$f\times g:Y\to A\times P$.
\end{enumerate}

\subsubsection{Proof of Proposition \ref{albdefA}.}
\label{pf-albdef}
Suppose we are given an $F$-scheme $Y$ and a morphism $f:Y\to A$. Factor $f$ as $Y\xrightarrow{\Gamma _f}
A\times Y \xrightarrow{\mathrm{pr}_1}A$
and write $\pi =\mathrm{pr}_2:A\times Y\to Y$.
Over $A$ there is the Poincar\'e sheaf $\mathcal{P}$, 
which is an extension of
$P_A$ by $\mathbb{G}_{m,A}$.
We see it as an element of
$\mathrm{Hom}_{D((\mathrm{Sch}/A)_{\mathrm{et}})}(P_A,\mathbb{G}_{m,A}[1])$.
\[ \begin{array}{r}
\underset{\vdots}{\mathcal{P}} \\
 Y\overset{\pi :=\mathrm{pr}_2}{\underset{\Gamma _f}{\leftrightarrows}} A\times Y
\xrightarrow{\mathrm{pr}_1} A
\end{array}\]
We have a commutative diagram in $D((\mathrm{Sch}/Y)_{\mathrm{et}})$
\begin{equation}\label{P_f}
\begin{array}{cccccc}
P_Y&\xrightarrow{f^*\mathcal{P}}&\mathbb{G}_{m,Y}[1]\\
{}\\
{}^{\Gamma _f^\# } \uparrow&&\uparrow ^{\Gamma _f^\# }\\
R\pi _*P_{A\times Y}&\xrightarrow{R\pi _*(\mathcal{P}\times _F Y)}&R\pi _*\mathbb{G}_{m,A\times Y}[1]\\
{}\\
{}^{\pi ^\# } \uparrow&&\\
P_Y&&
\end{array}
\end{equation}
and if we put $f=$ the constant map to $0$, it looks like:
\begin{equation}\label{P_0}
\begin{array}{cccccc}
P_Y&\xrightarrow{0}&\mathbb{G}_{m,Y}[1]\\
{}\\
{}^{\Gamma _0^\# } \uparrow&&\uparrow ^{\Gamma _0^\# }\\
R\pi _*P_{A\times Y}&\xrightarrow{R\pi _*(\mathcal{P}\times _F Y)}&R\pi _*\mathbb{G}_{m,A\times Y}[1]\\
{}\\
{}^{\pi ^\# } \uparrow&&\\
P_Y&&
\end{array}
\end{equation}

Note that we have $\Gamma _f^\# \circ \pi ^\# =\Gamma _0^\# \circ \pi ^\# =\mathrm{id}:P_Y\to P_Y$.
Therefore by (\ref{P_f}) we have that 
$b_\mathcal{P}(f):=f^* \mathcal{P}\in 
\mathrm{Ext}^1_{(\mathrm{Sch}/Y)_{\mathrm{et}}}
(P_Y,\mathbb{G}_{m,Y})$
is equal to 
$\Gamma _f^\# \circ \mathcal{P}\times _FY 
\circ \pi ^\# $.
From (\ref{P_0}) we see 
$\Gamma _0^\# \circ (\mathcal{P}\times _FY)
\circ \pi ^\# =0
: P_Y\to \mathbb{G}_{m,Y}[1]$.

\begin{claim}\label{mou-chotto}
The following commutes, where the unnamed maps are
the canonical ones:
\[ \begin{array}{cccccccc}
R\pi _*\mathbb{G}_{m,A\times Y}[1]&\to 
&\tau _{\ge 1}R\pi _*\mathbb{G}_{m,A\times Y}[1]&
\left(
\xrightarrow{\Gamma _f^\# -\Gamma _0^\# }\mathbb{G}_{m,Y}[1]
\right) \\
{}^{(\mathcal{P}\times _FY)\circ \pi ^\# }\uparrow&
&\uparrow&\\
P_Y&\xrightarrow{(\ref{Pic-sch})}&R^1\pi _* \mathbb{G}_{m,A\times Y}&
\end{array} \]
\end{claim}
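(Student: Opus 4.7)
The plan is to verify the commutativity of the diagram by computing both composite morphisms explicitly on local sections $f\in P(U)$ for étale $U\to Y$. Since both paths are natural in $U$ and are constructed from standard sheaf-theoretic and derived-category operations, it suffices to show they agree on sections and then invoke functoriality to promote this to an equality of morphisms in $D((\mathrm{Sch}/Y)_{\mathrm{et}})$.

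For the lower--right composite, the map (\ref{Pic-sch}) sends $f:U\to P$ to the class of the line bundle $(\mathrm{id}_A\times f)^*\mathcal{P}$ in $\mathrm{Pic}(A\times U)/\mathrm{Pic}(U)=R^1\pi_*\mathbb{G}_{m,A\times Y}(U)$. Composing with the canonical inclusion $R^1\pi_*\mathbb{G}_m\hookrightarrow \tau_{\ge 1}R\pi_*\mathbb{G}_m[1]$, one uses the compatibility of \S\ref{Pic-compat} to re-express the image as the Yoneda class of the extension
\[
0\to \mathbb{G}_{m,A\times U}\to \coprod_{i\in\mathbb{Z}}\bigl((\mathrm{id}_A\times f)^*\mathcal{P}\bigr)^i\to \mathbb{Z}_{A\times U}\to 0.
\]

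For the upper--left composite, the unit $\pi^\#$ takes $f\in P_Y(U)$ to $\pi^*f=f\circ\mathrm{pr}_2\in P_{A\times Y}(A\times U)$. Under the identification $P_{A\times U}=A\times U\times_F P$, the section $\pi^*f$ is exactly the morphism $\mathrm{id}_A\times f:A\times U\to A\times P$. Applying $R\pi_*(\mathcal{P}\times_F Y)$ then amounts to pulling back the $A$-extension (\ref{extA}) along $\mathrm{id}_A\times f$, and by \S\ref{Pic-compat} this produces precisely the Yoneda extension described above. The subsequent truncation $R\pi_*\mathbb{G}_m[1]\to \tau_{\ge 1}R\pi_*\mathbb{G}_m[1]$ discards only the $\pi_*\mathbb{G}_m[1]=\mathbb{G}_{m,Y}[1]$-component, which matches the $\mathrm{Pic}(U)$-ambiguity modded out in the lower path, so the two landing classes agree in $\mathbb{H}^0(U,\tau_{\ge 1}R\pi_*\mathbb{G}_m[1])$.

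The main obstacle is justifying the identification used in the upper path: one must verify that the pullback of the $A$-extension (\ref{extA}) of $P_A$ by $\mathbb{G}_{m,A}$ along the morphism $\mathrm{id}_A\times f:A\times U\to A\times P$ reproduces, on the nose, the $\mathbb{G}_m$-extension $\coprod_i\bigl((\mathrm{id}_A\times f)^*\mathcal{P}\bigr)^i$ of $\mathbb{Z}_{A\times U}$. This boils down to the biextension structure of the Poincaré sheaf, i.e.\ the fact that the $A$-extension structure of $\mathcal{P}$ and its interpretation as a $\mathbb{G}_m$-torsor on $A\times P$ are two avatars of the same data; the compatibility under base change is precisely the mechanism recalled in \S\ref{Pic-compat}. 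Once this pointwise identification is checked, the equality of the two composite morphisms in $D((\mathrm{Sch}/Y)_{\mathrm{et}})$ follows from naturality in $Y$.
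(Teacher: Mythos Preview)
Your approach is correct and essentially identical to the paper's: both reduce the derived-category equality to checking that the induced map $P_Y\to R^1\pi_*\mathbb{G}_{m,A\times Y}$ on sections coincides with (\ref{Pic-sch}), and both verify this by unwinding the Poincar\'e biextension via the torsor/extension dictionary of \S\ref{Pic-compat}. The paper packages the final verification as a separate lemma about $A$-schemes (pulling back $\mathcal{P}$ along $(f,g):Y\to A\times P$), but the content is the same as your ``main obstacle'' paragraph; one small correction is that on the big \'etale site you should check agreement over all $Y$-schemes $U$, not just \'etale ones, which is also what the paper does.
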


If we prove Claim \ref{mou-chotto},
the upper path from $P_Y$ to $\mathbb{G}_{m,Y}[1]$
is equal to $b_{\mathcal{P}}(f)$ and the lower path
is equal to $b_*(f)$, so Propositon \ref{albdefA}
follows.

Claim \ref{mou-chotto} follows if the map
\[ P_Y(Y)\to H^0(Y,R\pi _*\mathbb{G}_{m,A\times Y}[1])
=(R^1\pi _* \mathbb{G}_{m,A\times Y})(Y)=\mathrm{Pic}(A\times Y,0\times Y)\]
obtained by applying $H^0(Y,-)$ to
$(\mathcal{P}\times _FY)\circ \pi ^\#$
is the same as (\ref{Pic-sch}) for any $F$-scheme $Y$.

For this it suffices to show that the map 
obtained by applying $H^0(Y,-)$ to
$R\pi _*(\mathcal{P}\times _FY)$ in (\ref{P_f})
\[ \begin{array}{rccccccc}
(\pi _*P_{A\times Y})(Y)
&\to R^1\pi _*\mathbb{G}_{m,A\times Y}(Y)=\mathrm{Pic}(A\times Y,0\times Y)
\end{array}\]
is equal to the canonical inclusion (\ref{Pic-sch})
\[ (\pi _*P_{A\times Y})(Y)\subset \mathrm{Pic}(A\times (A\times Y)
,0\times (A\times Y)) \] 
followed by
pull-back by $(\mathrm{diag}_A)\times \mathrm{id}_Y:A\times Y\to A\times A\times Y$. 

Replacing $A\times Y$ by an arbitrary $F$-scheme $Y$,
we are reduced to:

\begin{lemma}
For any $A$-scheme $f:Y\to A$, the following commutes:
\[ \begin{array}{cccccc}
P_Y(Y)&\xrightarrow{\mathcal{P}\times _AY}&H^1(Y,\mathbb{G}_m)\\ {}\\
\cap ^{(\ref{Pic-sch})}&\nearrow _{\Gamma _f^*}& \\
\mathrm{Pic}(A\times Y,0\times Y)
\end{array}\]
\end{lemma}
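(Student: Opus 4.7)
The plan is a definition-chase: both routes around the triangle compute the same $\mathbb{G}_m$-torsor on $Y$, namely $(f\times g)^*\mathcal{P}$, for each $g\in P_Y(Y)=P(Y)$. The content of the lemma is really just the agreement between the two ways of extracting cohomology classes from the Poincar\'e biextension, and all of it is governed by the dictionary (i)--(iv) of \S\ref{Pic-compat}.

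First I would unravel the lower path. Given $g\in P_Y(Y)=P(Y)$, the inclusion (\ref{Pic-sch}) sends it to the rigidified line bundle $(\mathrm{id}_A\times g)^*\mathcal{P}$ on $A\times Y$. Pulling back by $\Gamma_f=(f,\mathrm{id}_Y):Y\to A\times Y$ gives
\[
\Gamma_f^*(\mathrm{id}_A\times g)^*\mathcal{P}=(f\times g)^*\mathcal{P}\in\mathrm{Pic}(Y)=H^1(Y,\mathbb{G}_m).
\]

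Next I would unravel the upper path. By definition, $\mathcal{P}\times_AY$ is the pull-back along $f:Y\to A$ of the biextension (\ref{extA}), so as an extension of $P_Y$ by $\mathbb{G}_{m,Y}$ its underlying $\mathbb{G}_m$-torsor on $P_Y=Y\times P$ is $(f\times\mathrm{id}_P)^*\mathcal{P}$. The key input is the compatibility recalled in \S\ref{Pic-compat}: the connecting homomorphism $P_Y(Y)\to H^1(Y,\mathbb{G}_m)$ attached to an extension sends a section to the pull-back of its underlying torsor by the graph of the section (the equality of (i) and (iii) in that list). Applied here, this identifies the image of $g$ with
\[
\Gamma_g^*(f\times\mathrm{id}_P)^*\mathcal{P}=\bigl((f\times\mathrm{id}_P)\circ\Gamma_g\bigr)^*\mathcal{P}=(f\times g)^*\mathcal{P},
\]
where $\Gamma_g=(\mathrm{id}_Y,g):Y\to Y\times P$. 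Both paths therefore produce the same class, proving commutativity.

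The proof has no real mathematical obstacle; the only thing demanding care is the bookkeeping of which scheme each morphism lives over and how the rigidifications along the respective zero sections match up. The sole nontrivial input is the identification of the connecting map with a graph-pull-back, and that has already been established, in full generality, in \S\ref{Pic-compat}.
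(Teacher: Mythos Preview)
Your proof is correct and follows exactly the paper's approach: the paper's argument is the one-line observation that, by \S\ref{Pic-compat} and (\ref{Pic-sch}), both routes send $g:Y\to P$ to the pull-back $(f,g)^*\mathcal{P}$. You have simply unpacked each composition explicitly, which is a faithful expansion of the paper's sentence rather than a different method.
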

But this is clear because by {\S}\ref{Pic-compat} and 
(\ref{Pic-sch})
both maps send $g:Y\to P$ to the pull-back of
the $\mathbb{G}_m$-torsor $\mathcal{P}$ on $A\times P$ to $Y$
by the morphism $(f,g):Y\to A\times P$.
This completes the proof of Proposition \ref{albdefA}.

\begin{remark}
Proposition \ref{albdefA} answers the question
raised by van Hamel in \cite[Remark 3.4]{hamel}.
\end{remark}

\section{Another surjectivity result}
\begin{theorem}\label{th2}
Let $K$ be a Henselian discrete valuation field of characteristic $(0, p)$ ($p$ positive) 
with residue field $k$ over which any principal homogeneous space under 
any abelian variety is trivial (e.g.\ a finite field or a separably closed field).
Assume that $v_K(p)< p-1$ where $v_K$ is the normalized additive valuation of $K$.

Let $X$ be a smooth projective variety over $K$ with good reduction. Assume $X$ has a degree $1$ zero-cycle (always true if $k$ is finite or separably closed).

Then, the albanese map
\[ \mathrm{CH}_0(X)^0 \to \mathrm{Alb}_X(K) \]
is surjective.
\end{theorem}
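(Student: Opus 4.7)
My plan is to exploit the good reduction to replace $\mathrm{Alb}_X$ by an abelian scheme $\mathcal{A}$ over $\mathcal{O}_K$ (its N\'eron model), and then analyze the Albanese map level by level along the short exact sequence
\[
0 \to \widehat{\mathcal{A}}(\mathfrak{m}_K) \to \mathcal{A}(\mathcal{O}_K) = \mathrm{Alb}_X(K) \to \mathcal{A}(k) \to 0.
\]
It then suffices to show that the image of $\mathrm{alb}_X$ surjects onto $\mathcal{A}(k)$ and also contains $\widehat{\mathcal{A}}(\mathfrak{m}_K)$.

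For the reduction piece, I would use that smoothness of the good-reduction model $\mathcal{X}$ over the Henselian ring $\mathcal{O}_K$ lets every closed point of the special fiber $Z$ lift to a closed point of $\mathcal{X}$ via Hensel's lemma, so specialization gives a surjection $\mathrm{CH}_0(X)^0 \twoheadrightarrow \mathrm{CH}_0(Z)^0$ (the degree-one cycle assumption passes the degree-zero conditions across). The hypothesis that every principal homogeneous space under every abelian variety over $k$ is trivial then implies $\mathrm{CH}_0(Z)^0 \twoheadrightarrow \mathrm{Alb}_Z(k)=\mathcal{A}(k)$: in the finite-field case this is Kato--Saito \cite{KS}, while in the separably closed case it follows from the classical surjectivity of the Albanese over algebraically closed fields together with the fact that $\mathrm{Alb}_Z(k) = \mathrm{Alb}_Z(\overline{k})$ when $k$ is separably closed, since the closed points of an abelian variety have separable residue fields.

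For the formal piece I would use a Henselian implicit-function argument. Choose finitely many closed points $x_1, \dots, x_r$ of $X$, each obtained as a Hensel lift of a closed point of $Z$, such that the sum of differentials $\bigoplus_i T_{x_i}\mathcal{X} \to \mathrm{Lie}(\mathcal{A})$ of the Albanese map is a surjection of $\mathcal{O}_K$-modules; this is possible because by the universal property of $\mathrm{Alb}$ tangent directions at points of $X$ generate $\mathrm{Lie}(\mathrm{Alb}_X)$. The induced formal morphism
\[
F\colon \prod_i \widehat{\mathcal{X}}_{x_i} \longrightarrow \widehat{\mathcal{A}}, \qquad (y_i)_i \longmapsto \sum_i \bigl(\mathrm{alb}(y_i) - \mathrm{alb}(x_i)\bigr),
\]
has $\mathcal{O}_K$-split surjective differential at the origin, so Newton iteration over the Henselian base $\mathcal{O}_K$ should produce, for any prescribed $a \in \widehat{\mathcal{A}}(\mathfrak{m}_K)$, deformations $y_i$ with $F(y_1,\dots,y_r)=a$; the resulting cycle $\sum[y_i]-\sum[x_i]$ lives in $\mathrm{CH}_0(X)^0$ and maps to $a$ under $\mathrm{alb}_X$.

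The ramification hypothesis $v_K(p)<p-1$ enters to make $\widehat{\mathcal{A}}$ \emph{linearisable}: in this range the formal logarithm gives an isomorphism of topological $\mathcal{O}_K$-modules $\widehat{\mathcal{A}}(\mathfrak{m}_K) \cong \mathrm{Lie}(\mathcal{A})\otimes_{\mathcal{O}_K}\mathfrak{m}_K$, which both trivialises the group law in a neighbourhood large enough for Newton iteration to apply globally, and identifies the target of $F$ with a free $\mathcal{O}_K$-module of finite rank. I expect the main obstacle to be precisely this step of pushing the Newton iteration all the way to cover $\widehat{\mathcal{A}}(\mathfrak{m}_K)$ rather than only an open neighbourhood of the origin; here the linearisation above combined with the fact that the image of $\mathrm{alb}_X$ is automatically a subgroup of $\mathrm{Alb}_X(K)$ (being the image of a group homomorphism) should let one promote an open neighbourhood in the image to all of the formal group, closing the argument.
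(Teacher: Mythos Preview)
Your strategy diverges from the paper's and has a genuine gap. Both the ``reduction piece'' and the ``formal piece'' rest on an unjustified identification of the special fibre $\mathcal{A}_k$ of the N\'eron model of $\mathrm{Alb}_X$ with $\mathrm{Alb}_Z$. You write $\mathrm{Alb}_Z(k)=\mathcal{A}(k)$ outright, and your claim that tangent spaces $T_{x_i}\mathcal{X}$ surject onto $\mathrm{Lie}(\mathcal{A})$ \emph{as $\mathcal{O}_K$-modules} is the same statement on Lie algebras. But your justification (``by the universal property of $\mathrm{Alb}$, tangent directions at points of $X$ generate $\mathrm{Lie}(\mathrm{Alb}_X)$'') is a statement about the generic fibre only; by Nakayama what you actually need is surjectivity modulo $\mathfrak m_K$, i.e.\ that the pull-back $H^0(\mathcal{A}_k,\Omega^1)\to H^0(Z,\Omega^1)$ along $Z\to\mathcal{A}_k$ is injective. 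A priori $\mathrm{Alb}_Z\to\mathcal{A}_k$ is only an isogeny (same dimension, isomorphism on $\ell$-adic Tate modules by smooth proper base change), and a possible $p$-power kernel obstructs both pieces: the hypothesis on $k$ concerns torsors under abelian varieties, not under finite group schemes, so it does not force $\mathrm{Alb}_Z(k)\to\mathcal{A}_k(k)$ to be surjective; and an inseparable part in the kernel kills the Lie-algebra surjectivity. Your fallback in the last paragraph does not rescue this either: a subgroup of $\widehat{\mathcal{A}}(\mathfrak m_K)\cong(\mathfrak m_K^g,+)$ that contains an open neighbourhood of $0$ can perfectly well be a proper subgroup (e.g.\ $(\mathfrak m_K^N)^g$). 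Note, incidentally, that if the $\mathcal{O}_K$-surjectivity of the differential \emph{did} hold, formal smoothness alone would give surjectivity of $F$ on all of $\widehat{\mathcal{A}}(\mathfrak m_K)$ with no need for the logarithm --- so the hypothesis $v_K(p)<p-1$ is not really doing the work you assign to it.

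The paper proceeds quite differently, and the ramification bound enters elsewhere. Using the Bertini theorem over $\mathcal{O}_K$ of Jannsen--Saito, one finds a smooth curve $C\subset X$ with good reduction and a rational point; then $0\to N\to J_C\to\mathrm{Alb}_X\to 0$ is an exact sequence of abelian varieties over $K$. The hypothesis $v_K(p)<p-1$ is now invoked through \cite[p.~187, Theorem~4]{BLR}: the induced sequence of N\'eron models is exact and consists of abelian schemes, so over $k$ one has $0\to\overline N\to J_{\bar C}\to\mathcal{A}_k\to 0$ with $\overline N$ an \emph{abelian variety}. For any $a\in\mathrm{Alb}_X(K)=\mathcal{A}(\mathcal{O}_K)$, the fibre of $J_{\bar C}$ over $\bar a\in\mathcal{A}_k(k)$ is then a torsor under $\overline N$, trivial by the hypothesis on $k$, and Hensel lifts a $k$-point to a $K$-point of $J_C$; since $C$ has a rational point, $\mathrm{CH}_0(C)^0\cong J_C(K)$, and surjectivity follows. (This argument, together with Gabber's Lefschetz-type statement for $\bar C\subset Z$, does show a posteriori that $\mathrm{Alb}_Z\cong\mathcal{A}_k$ under $v_K(p)<p-1$, so your outline can in fact be completed --- but only after importing exactly the tools the paper uses, at which point the curve argument is more direct.)
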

\begin{proof}
Since $X$ has a degree 1 zero-cycle, we may assume $X$ has a rational point
by a trace argument.

By a Bertini theorem over discrete valuation rings due to U. Jannsen and S. Saito
\cite[Theorem 4.2]{ss2}, 
there is a smooth curve $C \subset X$ which is obtained by repeated hypersurface sections 
and contains a rational point and has good reduction.

As is explained in \cite[Proposition 2.4]{spacefilling} 
such a hypersurface section has the property that the homomorphism
\[ \mathrm{Alb}_C \to \mathrm{Alb}_X \]
is smooth and has a connected kernel $N$.

Now $\mathrm{Alb}_C (=J_C)$ has good reduction because $C$ has good reduction.
We have now an exact sequence of Abelian varieties
\[ 0 \to N \to J_C \to \mathrm{Alb}_X \to 0. \]
By \cite[Theorem 4 on p.187]{BLR}, which is applicable by the assumption $v_K(p)< p-1$,
the induced sequence of N\'eron models
\[ 0 \to \mathcal{N} \to \mathcal{J}_C \to \mathcal{A}\mathit{lb}_X \to 0 \]
is exact and $\mathcal{N}$ and $\mathcal{A}\mathit{lb}_X$ also have good reduction. 
Therefore it induces an exact sequence of Abelian varieties over $k$ 
\[ 0 \to \overline{N} \to \overline{J_C} \to \overline{\mathrm{Alb}_X} \to 0. \]
(Here overline means reduction, not the algebraic closure.)
~\newline

Now we are going to establish the surjectivity of $J_C(K) \to \mathrm{Alb}_X(K)$; 
if it is proved our assertion follows from the 
known fact that $\mathrm{CH}_0(C)^0\to J_C(K)$ is an isomorphism
as $C$ has a rational point.
Pick any $a \in \mathrm{Alb}_X(K)$. We show $N_a=J_C \times _{\mathrm{Alb}_X} a$ 
has a rational point. 
The section $a$ naturally extends to a section 
$a' \in \mathcal{A}\mathit{lb}_X (\mathcal{O}_k)$  
and induces a section 
$\overline{a}\in \overline{\mathrm{Alb}_X}(k)$. 
We consider the scheme $\mathcal{N}_{a'}=\mathcal{J}_C \times _{\mathcal{A}\mathrm{lb}_X} a'$.
Its special fiber $\overline{N}_{\overline a}
=\overline{J_C}\times _{\overline{\mathrm{Alb}_X}} \overline{a}$ is a torsor over the field $k$ 
under the abelian variety $\overline{N}$. 
By the assumption on the residue field, it has a rational point. 
By Hensel's lemma, the rational point lifts to a section of $\mathcal{N}_{a'}$, 
giving a rational point on $N_a$.
\end{proof}

\begin{acknowledgements}
The author thanks his advisor Professor Shuji Saito for always encouraging and guiding him. 
A part of this work was done while the author was staying at the Universit\"at Duisburg-Essen in 2014.
He thanks it for its hospitality.
The referee has helped the author improve the exposition and pointed out some errors in an earlier draft.
\end{acknowledgements}

\end{document}